\newcommand{\sgn}{\mathrm{sgn}}     %sign
\newcommand{\hyp}{B_T}  %taxicab hyperbolic space
\newcommand{\len}{\mathcal{L}}   %length of a curve
\newcommand{\origin}{\mathbf{0}}  %origin
\newtheorem{main}{Theorem}
\newtheorem{thm}{Theorem}[section]
\newtheorem{lemma}[thm]{Lemma}
\newtheorem{prop}[thm]{Proposition}
\newtheorem{cor}[thm]{Corollary}
\begin{document}

\title{A Poincar\'{e} ball model for taxicab hyperbolic geometry}
\author{Aaron Fish*, Dylan Helliwell}
\date{\today}
\thanks{*Aaron Fish was supported by the Seattle University Mathematics Early Research REU in 2017.}
\thanks{We are deeply grateful to our reviewer who provided us with advice and suggestions that resulted in a significant improvement of this paper into its present form.  Any remaining errors are our own.}

\begin{abstract}
Taxicab space is a modification of Euclidean space that uses an alternative notion of distance.  Similarly, the Poincar\'{e} ball is a model of hyperbolic geometry that consists of a subset of Euclidean space with an alternative notion of distance.  In this paper, we merge these two variations to create a taxicab version of the Poincar\'{e} ball.  We determine the isometry group for this new space and show that this space is hyperbolic in the sense of Gromov.
\end{abstract}

\maketitle

\section{Introduction}

Taxicab space in  $n$ dimensions is the set $\mathbb{R}^n$ with the taxicab distance $d_T$ defined as follows:  for two points $x = (x_1, \ldots, x_n)$ and $y = (y_1, \ldots y_n)$,
\[
d_T(x, y) = \sum_{i = 1}^n |x_i - y_i|.
\]
This distance function arises from the taxicab or $\ell^1$ norm
\[
\|v\|_T = \sum_{i = 1}^n |v_i|.
\]
This space was introduced by Hermann Minkowski in the late 19th century as an alternative to Euclidean geometry and has since enjoyed a fair amount of interest.  See \cite{Krause, Reynolds, BCFHMNSTV, FHS} for details about this space along with various constructions and objects that are developed within it.

The Poincar\'{e} ball is a model of hyperbolic geometry consisting of the set $B = \left\{x \in \mathbb{R}^n: \sum_{i = 1}^n x_i^2 < 1\right\}$ equipped with the Riemannian metric
\[
g_P(v, w) = 4 \frac{v \cdot w}{(1 - \|x\|_E^2)^2}
\]
where $v$ and $w$ are tangent vectors at the point $x \in D$ and $\|\cdot\|_E$ is the Euclidean norm.  This gives rise to a norm on each tangent space 
\[
\|v\|_{P} = 2 \frac{\|v\|_E}{1 - \|x\|_E^2}
\]
and from this, the length of a piecewise smooth curve $\gamma:[0, 1] \rightarrow B$ is given by
\[
\len_P (\gamma) = \int_0^1 \| \gamma'(t) \|_P \, dt = \int_0^1 2 \frac{\|\gamma'(t)\|_E}{1 - \|\gamma(t)\|_E^2}\, dt.
\]
See for example \cite{Anderson} for more detail about hyperbolic geometry, including the Poincar\'{e} ball as a model.

In this paper, we create a taxicab Poincar\'{e} ball by following this development using taxicab norm
wherever possible.  Specifically, we define the taxicab Poincar\'{e} ball to be the set
\[
\hyp^n = \{x \in \mathbb{R}^n: \|x\|_T < 1\} = \left\{x \in \mathbb{R}^n: \sum_{i = 1}^n |x_i| < 1\right\}.
\]
This set together with its boundary is a regular $n$-orthoplex.  For example, when $n = 2$, $\hyp^2$ is an open square with its vertices on the coordinate axes and when $n = 3$, $\hyp^3$ is an open octahedron.  When the specific dimension, which will always be at least two, is not playing a critical role, the superscript $n$ may be omitted.

We equip this set with the norm on each tangent space given by
\[
\|v\|_{\hyp} = \frac{\|v\|_T}{1 - \|x\|_T^2}
\]
where $v$ is a vector based at $x \in \hyp$ and the coordinate basis is used to define the norm on the tangent space.  We start with a norm instead of a Riemannian metric because the taxicab norm does not arise from an inner product.  We also omit a factor of 2.  In the case of the standard Poincar\'{e} ball, this factor ensures that the resulting curvature is uniformly equal to $-1$, and we will not be attempting to compute curvature in $\hyp$.  That being said, see the comment after Theorem~\ref{gromovhyperbolicthm} below.

This norm allows us to measure the length of curves that admit absolutely continuous parameterizations.  Given two points $p$ and $q$, the set of such curves from $p$ to $q$ is denoted $\Gamma(p, q)$.  To determine the length minimizers among these curves, we introduce a point $m(p, q)$ which we call the minimal point because it turns out to be the point on the distance minimizer that is closest to the origin.  This point gives rise to an L-shapbed curve $\lambda_{p, q}$, which is just the concatenation of the Euclidean line segments from $p$ to $m(p, q)$ and from $m(p, q)$ to $q$.  Minimal points and L-shaped curves are defined in Section~\ref{minimalpointsubsec} and~\ref{curvesubsec} and illustrated in the case $n = 2$ in Figure~\ref{msandlambdasfig}.  With this, we have:

\begin{main} \label{minimizerthm}
For $p, q \in \hyp$ and $\gamma \in \Gamma(p, q)$, $\len(\gamma) \geq \len(\lambda_{p, q})$ with equality if and only if $\gamma$ is fully monotonic and passes through $m(p, q)$.
\end{main}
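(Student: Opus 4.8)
The plan is to combine a one-variable antiderivative identity with two reductions: from an arbitrary curve to a \emph{doubly monotonic} one (meaning each coordinate function $\gamma_i$ is monotone), and from a doubly monotonic curve to $\lambda_{p,q}$. Since $\len$ is reparametrization invariant and $\|\cdot\|_{\hyp}$ is invariant under the reflections $x_i\mapsto -x_i$, I would first assume $p_i\le q_i$ for $i=1,2$, so that ``doubly monotonic'' means both coordinates are nondecreasing and, by the description in Section~\ref{minimalpointsubsec}, $m(p,q)$ is the point of the rectangle $R(p,q)=[p_1,q_1]\times[p_2,q_2]$ nearest the origin. The core observation is that $1/(1-s^2)$ has antiderivative $\operatorname{arctanh}s$; hence, writing $r(t)=\|\gamma(t)\|_T$, on any subinterval where $\gamma$ stays in one closed quadrant with both $|\gamma_1|,|\gamma_2|$ moving the same way — so that $\|\gamma'\|_T=|r'|$ pointwise — that piece contributes exactly $|\operatorname{arctanh}r(b)-\operatorname{arctanh}r(a)|$, while in general $|r'|\le\|\gamma'\|_T$ almost everywhere.

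For the first reduction, suppose some coordinate, say $\gamma_1$, is not monotone; then $|\gamma_1|$ has an interior local maximum $M$ on an interval $[a,b]$ with $|\gamma_1|\equiv M$ on $\partial[a,b]$ and $|\gamma_1|<M+\varepsilon$ inside. Replacing $\gamma_1$ there by the constant of absolute value $M$ closest to its boundary values produces $\tilde\gamma\in\Gamma(p,q)$ with $|\tilde\gamma_1|\le|\gamma_1|$ and $|\tilde\gamma_1'|\le|\gamma_1'|$ on $[a,b]$, hence $\|\tilde\gamma\|_T\le\|\gamma\|_T$ pointwise and $\len(\tilde\gamma)<\len(\gamma)$, the drop being at least $\int_a^b|\gamma_1'|>0$. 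Iterating this — and, for general absolutely continuous curves, combining it with an approximation by polygonal paths and a compactness argument showing that a minimizer exists in the complete, locally compact length space $(\hyp,\len)$ — yields a doubly monotonic curve of no greater length, so a non-doubly-monotonic curve is strictly longer than the minimizer.

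For the second reduction, a doubly monotonic $\gamma$ stays in $R(p,q)$ and meets at most two coordinate axes, splitting $[0,1]$ into at most three subintervals each lying in one closed quadrant where $\|\gamma\|_T$ is a linear function of the coordinates. On a subinterval over which the curve moves toward the origin in both coordinates, $r$ is monotone and that piece contributes the exact value of $\operatorname{arctanh}r$ at its endpoints; on every other subinterval I would write the integrand as $\tfrac{|\gamma_1'|}{1-\|\gamma\|_T^2}+\tfrac{|\gamma_2'|}{1-\|\gamma\|_T^2}$ and bound each term below using $\|\gamma\|_T\ge|\gamma_1|+c$ (resp.\ $\ge c'+|\gamma_2|$), where $c,c'$ are the minima of $|\gamma_2|,|\gamma_1|$ on that subinterval, each of which is a coordinate of $m(p,q)$ or the value attained at an axis crossing. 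Summing these estimates, the terms $\operatorname{arctanh}r$ at the axis crossings telescope and cancel, leaving
\[
\len(\gamma)\ \ge\ \operatorname{arctanh}\|p\|_T+\operatorname{arctanh}\|q\|_T-2\operatorname{arctanh}\|m(p,q)\|_T\ =\ \len(\lambda_{p,q}),
\]
the last equality because the two segments of $\lambda_{p,q}$ move respectively toward and away from the origin in both coordinates.

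Equality then propagates backward: it forces, on each piece where $r$ was not monotone, that $\gamma_1$ is stationary off the set where $|\gamma_2|$ is minimal and $\gamma_2$ stationary off the set where $|\gamma_1|$ is minimal, which together with double monotonicity and the boundary conditions pins $\gamma$ down to moving one coordinate at a time in the pattern of $\lambda_{p,q}$ and in particular forces it through $m(p,q)$; conversely a doubly monotonic curve through $m(p,q)$ decomposes into a portion heading toward the origin in both coordinates, of length $\operatorname{arctanh}\|p\|_T-\operatorname{arctanh}\|m(p,q)\|_T$, and one heading away, of length $\operatorname{arctanh}\|q\|_T-\operatorname{arctanh}\|m(p,q)\|_T$, summing to $\len(\lambda_{p,q})$. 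I expect the main obstacles to be, first, making the reduction to doubly monotonic curves airtight: naive coordinate-by-coordinate flattening can \emph{raise} the taxicab norm, so one must flatten absolute values and handle valleys (one-signed V-shapes) with care, which is why the existence-of-minimizer route is convenient; and second, the bookkeeping in the third paragraph, which branches according to how many coordinates change sign along $\gamma$ and which of $p$ and $q$ supplies each coordinate of $m(p,q)$ — in each branch one must verify that the choice of offsets $c,c'$ makes the telescoping exact and that the apparently lossy coordinate splitting loses nothing in aggregate.
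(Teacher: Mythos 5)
Your overall architecture --- the $\tanh^{-1}$ antiderivative computation, a reduction from arbitrary curves to doubly monotonic ones, and then a comparison of doubly monotonic curves against $\lambda_{p,q}$ via the pointwise bound $\|\gamma\|_T \ge |\gamma_1| + c$ --- matches the paper's, and your third paragraph is essentially the paper's Lemma~\ref{neitherpointbeyondandcurvebeyondmlemma}, including the equality analysis. The gap is in the first reduction, and it sits exactly where you flag it. Flattening hills of $|\gamma_1|$ only removes local maxima; it does nothing about valleys, where $\gamma_1$ dips below its later values without changing sign, and any surgery that fills a valley in raises $|\gamma_1|$, hence raises the weight $1/(1-\|\gamma\|_T^2)$ and can lengthen the $|\gamma_2'|$ contribution. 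Appealing to the existence of a minimizer does not repair this: even granting existence (which itself needs lower-semicontinuity machinery the paper never requires), you would still have to show that a curve with a valley can be strictly shortened in order to conclude that the minimizer is doubly monotonic, and that is precisely the step you have not supplied. Worse, the ``only if'' clause of the theorem demands a strict-improvement argument for \emph{every} non-doubly-monotonic curve, not merely for minimizers, so the existence route cannot substitute for the surgery even in principle.

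The paper's device for this is the one missing ingredient. Given $\gamma$ from $p$ to $q$ with $p$ beyond $q$ (both in the first quadrant, say), set $r = \bigl(\min_t \gamma_1(t), \min_t \gamma_2(t)\bigr)$ and consider the augmented curve $\gamma \cup \sigma_{q,r}$. Every point of this curve lies beyond $r$, so replacing each coordinate by its cumulative minimum (Proposition~\ref{minfunctionprop}) yields a doubly monotonic curve from $p$ to $r$; since the cumulative minimum only ever moves points \emph{toward} the origin, both the numerator and the weight decrease, so the shadow is never longer and is strictly shorter unless $\gamma$ was already doubly monotonic (Lemma~\ref{pandcurvebeyondqlemma}). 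All doubly monotonic curves from $p$ to $r$ have length $\len(\sigma_{p,q}) + \len(\sigma_{q,r})$ by Lemma~\ref{dmpbeyondqlemma}, and cancelling $\len(\sigma_{q,r})$ gives $\len(\gamma) \ge \len(\sigma_{p,q})$. This appended-segment-plus-shadow trick sidesteps valleys entirely; the remaining configurations are then handled by cutting $\gamma$ at its last crossing of the line $x_1 = p_1$ and of the coordinate axes and applying this case together with your paragraph-three estimate to the pieces. Without something playing this role, your reduction to doubly monotonic curves does not close.
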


Here, $\len$ is the length functional for $\hyp$ and ``fully monotonic'' means that each component of the parameterization of $\gamma$ is monotonic.  By way of comparison, geodesics in taxicab space are fully monotonic curves.  On the other hand, geodesics in the Poincar\'{e} ball $B$ are arcs of circles perpendicular to $\partial B$, and these curves are used to define the distance between points $p, q \in B$, resulting in
\[
d_{P}(p, q) = \cosh^{-1}\left(1 + 2 \frac{\|p - q\|_E^2}{(1-\|p\|_E^2)(1-\|q\|_E^2)}\right).
\]
See \cite{Anderson} for the development of length and distance in the Poincar\'{e} ball.  Similarly, in $\hyp$, the curves that minimize the length functional $\len$ are given by Theorem~\ref{minimizerthm} and these curves are used to define a distance function on $\hyp$:

\begin{main} \label{distthm}
The distance function on $\hyp$ arising from the length functional $\len$ is
\begin{equation*}  %\label{BTdisteq}
d(p, q) = \tanh^{-1}\bigl(\|p\|_T \bigr) + \tanh^{-1}\bigl( \|q\|_T \bigr)
		- 2 \tanh^{-1} \bigl( \|m\|_T \bigr),
\end{equation*}
where $m = m(p, q)$.
\end{main}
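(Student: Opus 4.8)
The plan is to use Theorem~\ref{minimizerthm} to identify $d(p,q)$ with $\len(\lambda_{p,q})$ and then to evaluate the length of $\lambda_{p,q}$ by a direct integration. Since $\lambda_{p,q} \in \Gamma(p,q)$ and, by Theorem~\ref{minimizerthm}, every $\gamma \in \Gamma(p,q)$ satisfies $\len(\gamma) \ge \len(\lambda_{p,q})$, the distance function arising from $\len$ is $d(p,q) = \len(\lambda_{p,q})$. Because $\lambda_{p,q}$ is the concatenation of the segment from $p$ to $m$ and the segment from $m$ to $q$, where $m = m(p,q)$, additivity of length over concatenation gives $d(p,q) = \len(\sigma_{p,m}) + \len(\sigma_{m,q})$, writing $\sigma_{a,b}$ for the straight segment from $a$ to $b$. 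So it suffices to compute the length of each of these two segments.

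To evaluate $\len(\sigma_{p,m})$, set $r(x) = ||x||_T$ and recall that $||\cdot||_{\hyp}$ depends on the base point only through $r$, so that $\len(\gamma) = \int_0^1 \frac{||\gamma'(t)||_T}{1 - r(\gamma(t))^2}\,dt$ for any admissible $\gamma$. The function $r$ is linear on each closed quadrant; hence if $\sigma_{p,m}$ lies inside a single closed quadrant then $t \mapsto r(\sigma_{p,m}(t))$ is affine, and one has $||\sigma_{p,m}'(t)||_T \ge \left| \dtime r(\sigma_{p,m}(t)) \right|$ with equality exactly when the two coordinate velocities are taxicab-compatible, i.e.\ $\sgn(\gamma_1 \gamma_2)\,\gamma_1' \gamma_2' \ge 0$. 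By the construction of $m = m(p,q)$ in Section~\ref{minimalpointsubsec}, both $\sigma_{p,m}$ and $\sigma_{m,q}$ do lie in single closed quadrants and are of this \emph{efficient} type, and moreover $||m||_T \le ||p||_T$ and $||m||_T \le ||q||_T$ with $r$ non-increasing along $\sigma_{p,m}$ and non-decreasing along $\sigma_{m,q}$. Granting this, the substitution $s = r(\sigma_{p,m}(t))$ together with $\int \frac{ds}{1 - s^2} = \tanh^{-1} s$ (legitimate since $0 \le r < 1$ throughout $\hyp$) collapses the integral to $\len(\sigma_{p,m}) = \tanh^{-1}(||p||_T) - \tanh^{-1}(||m||_T)$, and symmetrically $\len(\sigma_{m,q}) = \tanh^{-1}(||q||_T) - \tanh^{-1}(||m||_T)$. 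Adding these and writing $||p||_T = |p_1| + |p_2|$, $||q||_T = |q_1| + |q_2|$, $||m||_T = |m_1| + |m_2|$ produces the claimed formula.

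The hard part is the claim granted in the middle of the previous paragraph: that each of the two segments of $\lambda_{p,q}$ is efficient in the sense above. This has to be extracted from the definition of $m(p,q)$ in Section~\ref{minimalpointsubsec}, and I expect the cleanest argument to be a case analysis according to which closed quadrants contain $p$, $q$, and the two segments --- in each case verifying that no segment crosses a coordinate axis transversally (so $|x_1|$ and $|x_2|$ remain monotone along it) and that $\sgn(\gamma_1 \gamma_2)\,\gamma_1' \gamma_2' \ge 0$ holds along it. Degenerate configurations, where $p$, $q$, or $m$ sits on a coordinate axis or at the origin, or where $p$ and $q$ are collinear with the origin, will need separate but easy treatment: there a segment may only touch an axis or reduce to a point, yet the corresponding length formula still holds (possibly with a vanishing term), so the final expression for $d(p,q)$ is unaffected. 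Once efficiency is in place, only the routine telescoping integral above remains, and one should note in passing that $r < 1$ on $\hyp$ guarantees its convergence.
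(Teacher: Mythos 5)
Your proposal is correct and follows essentially the same route as the paper: Theorem~\ref{minimizerthm} gives $d(p,q)=\len(\lambda_{p,q})$, additivity splits this into $\len(\sigma_{p,m})+\len(\sigma_{m,q})$, and each segment's length telescopes to a difference of $\tanh^{-1}$ values via the substitution $u=|\gamma_1|+|\gamma_2|$. The ``efficiency'' claim you flag as the hard part is exactly what the paper has already packaged as Lemma~\ref{dmpbeyondqlemma} together with the immediate observation that $p$ and $q$ both lie beyond $m(p,q)$, so no new case analysis is needed.
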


With the distance function established, we can determine the isometry group.  The isometry group for taxicab space $(\mathbb{R}^n, d_T)$ is isomorphic to $\mathbb{R}^n \rtimes H_n$ where $H_n$ is the hyperoctahedral group of rank $n$. The transformations associated to $\mathbb{R}^n$ are translations and the transformations associated to $H_n$ are permutations of the coordinates, reflections across hyperplanes containing the origin, and their compositions \cite{Schattschneider, KocOz}.  Meanwhile, the isometry group for the Poincar\'{e} Ball is isomorphic to the orthochronous Lorentz group $O^+(n, 1)$.  In these two cases, the isometry group acts transitively on the space, reflecting the fact that both taxicab space and usual hyperbolic space are homogeneous.  We find that the isometry group for $\hyp$ is significantly more restrictive.

\begin{main} \label{isometrythm}
The isometry group for $\hyp^n$ is isomorphic to $H_n$.
\end{main}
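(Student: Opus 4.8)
The plan is to show that the obvious group of linear symmetries is all of $\mathrm{Isom}(\hyp)$. First I would verify that each of the eight linear maps generated by $(x_1,x_2)\mapsto(-x_1,x_2)$ and $(x_1,x_2)\mapsto(x_2,x_1)$ preserves the taxicab norm of every vector and of every point; hence each preserves $\hyp$, the tangent norm $\|\cdot\|_{\hyp}$, the functional $\len$, and therefore the distance $d$. These eight maps form a subgroup of $\mathrm{Isom}(\hyp)$ isomorphic to $D_4$, so everything reduces to showing there are no others. Throughout, an isometry is a distance--preserving bijection $\phi\colon\hyp\to\hyp$, so $\phi$ carries the metric sphere $S(z,R)=\{x:d(z,x)=R\}$ onto $S(\phi(z),R)$.

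The heart of the matter is to show $\phi(0)=0$. By Theorem~\ref{distthm}, $d(0,x)=\tanh^{-1}\|x\|_T$, so $S(0,R)$ is exactly the taxicab sphere $\{\|x\|_T=\tanh R\}$, a Euclidean square. Using the description of circles established just before this theorem, together with the explicit form of $d$ and the fact that $m(p,q)$ is the point of the bounding box of $p$ and $q$ nearest the origin, I would determine the intrinsic metric geometry of $S(0,R)$: it has exactly four points --- the corners $(\pm\tanh R,0)$ and $(0,\pm\tanh R)$ --- at which it is not, metrically, locally like its four open edges, and this count of four does not depend on $R$. I then expect to show that for $z\neq 0$ this fails: for a suitable $R$ the sphere $S(z,R)$ breaks up into arcs of two taxicab spheres of different radii joined by genuinely curved arcs, so it has more than four such distinguished points. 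Since the number of these points is an intrinsic invariant preserved by $\phi$, the point $\phi(0)$ must also have four--cornered spheres of every radius, forcing $\phi(0)=0$. \emph{This is the main obstacle}: verifying that every sphere about a nonzero point genuinely has more than four corners is where I expect most of the work to lie, and it is exactly the point where the prior classification of circles is needed.

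Once $\phi(0)=0$, then $\|\phi(x)\|_T=\tanh d(0,\phi(x))=\tanh d(0,x)=\|x\|_T$, so $\phi$ maps each taxicab sphere $\partial_r=\{\|x\|_T=r\}$ onto itself, and $\phi|_{\partial_r}$ is an isometry of $(\partial_r,d)$. As an isometry of a space homeomorphic to a circle it is a self--homeomorphism of that circle, and it permutes the four corners of $\partial_r$; a self--homeomorphism either preserves or reverses their cyclic order, so its action on the corners lies in $D_4$. To make this action independent of $r$, observe that the image under $\phi$ of the positive $x_1$--axis $\{(t,0):0<t<1\}$ is a connected subset of the four punctured radii $\{(\pm t,0),(0,\pm t):0<t<1\}$, hence lies in a single one of them; applying this to each radius shows that $\phi$ permutes the four radii by one fixed permutation, which lies in $D_4$ and is realized by a unique linear symmetry. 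Composing $\phi$ with that element, we may assume $\phi$ fixes each radius, and hence (using norm preservation) fixes the coordinate axes pointwise. Finally, for arbitrary $p=(p_1,p_2)$, the quantity $d(p,(t,0))$ with $t$ close to $1$ recovers $\max(p_1,0)$ from the formula of Theorem~\ref{distthm}, and likewise $\max(-p_1,0)$ and $\max(\pm p_2,0)$ from the other three half--axes; since $\phi(p)$ has the same taxicab norm as $p$ and the same distances to the fixed points $(\pm t,0)$, $(0,\pm t)$, it follows that $\phi(p)=p$. Thus $\phi$ is the identity, so the original isometry lay in $D_4$, and $\mathrm{Isom}(\hyp)\cong D_4$.
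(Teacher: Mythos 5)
Your skeleton---exhibit the eight signed-permutation isometries, show every isometry $\phi$ fixes $\theta$, then that it preserves the coordinate half-axes, then that it is determined by its values there---is the same as the paper's, and your closing step (recovering $\max(\pm p_1,0)$ and $\max(\pm p_2,0)$ from distances to points far out on the four half-axes) is a correct and arguably cleaner finish than the paper's three-circles argument. The gap is exactly where you flag it: $\phi(\theta)=\theta$ is not proved, and the invariant you propose is not yet usable. An isometry preserves only metric data, so ``corner of a sphere'' must be defined purely in terms of $d$ restricted to $S(z,r)$ before it can be counted, and you never say what that definition is. The count itself is also more delicate than your sketch suggests: by Theorem~\ref{taxihypcirclethm}, for small $r$ the circle $C_r(p)$ about a generic $p\neq\theta$ consists of one admissible segment in $Q_p$, one in the central rectangle, and two curved arcs in the strips---four junction points, the same number the square $C_r(\theta)$ has---so ``more than four distinguished points'' holds only for suitably large $r$ and only after you have shown that the junctions (straight-meets-curved, curved-meets-curved, straight-meets-straight) are all metrically detectable and, ideally, distinguishable from the square's corners. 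The same undefined notion is used a second time when you assert that $\phi|_{\partial_r}$ must permute the four corners of the taxicab square.

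The paper gets around all of this with a purely metric observation you could adopt. A small circle $C_r(p)$ about $p\neq\theta$ contains a doubly monotonic arc $\gamma$ lying in a strip, and by Theorem~\ref{minimizerthm}, Case 1, such an arc realizes the distance between its endpoints, $\len(\gamma)=d(q,s)$; its image under $\phi$ would be an arc of the square $C_r(\theta)$ lying along one edge inside a single quadrant, and by Theorem~\ref{minimizerthm}, Case 2, any such arc is \emph{strictly} longer than the distance between its endpoints, since the L-shaped path through the minimal point is shorter. As both curve length and distance are isometry invariants, this is an immediate contradiction. For the half-axes step the paper likewise replaces corner-counting by a concrete metric count: once $\theta$ is fixed, $r_x=d(x,\theta)$ is preserved, and the set $\{x\in C_r(p): r_x=r_s\}$ is a single point when $p$ lies on an axis but an entire admissible segment when it does not. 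If you substitute arguments of this kind for your corner invariant---or supply a genuine metric definition of corner and verify the counts for all centers, including those on the axes---the remainder of your proof goes through.
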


The fact that $H_n$ is isomorphic to a subgroup of the isometry group follows from the fact that permutations of the coordinates and reflections across coordinate hyperplanes containing the origin preserve the norm $\|\cdot \|_{\hyp}$ and hence must be isometries.  These isometries will be used regularly and without explicit mention to simplify various arguments throughout this paper.

Since the isometry group does not act transitively, $\hyp$ is not homogeneous.  Despite this, hyperbolicity can still be explored.  For a given pair of points $x$ and $y$, and base point $z$, let $G(x, y; z)$ be the Gromov product

\[
G(x, y; z) = \frac{1}{2} \left[d(x, z) + d(y, z) - d(x, y) \right].
\]
Introduced by Gromov (see for example \cite{Gromov}), a metric space is said to be $\delta$-hyperbolic if there exists a  $\delta \geq 0$ such that for all $x, y, z, w$, the following inequality holds:
\begin{equation} \label{gromovineq}
G(x, y; w) \geq \min \left\{ G(x, z; w), G(y, z; w) \right\} - \delta.
\end{equation}
For example, the usual hyperbolic space with sectional curvature -1, is $\ln(2)$-hyperbolic, and, as shown in \cite{NicaSpakula}, this value for $\delta$ is optimal.  Also, $\mathbb{R}$-trees are 0-hyperbolic.  As the following theorem shows, $B_T$ provides a new concrete example in this area, the proof of which relies on the fact that Theorem~\ref{distthm} allows us to explicitly calculate certain key Gromov products.

\begin{main} \label{gromovhyperbolicthm}
$\hyp$ is $\ln(3)$-hyperbolic and this value of $\delta$ is optimal.
\end{main}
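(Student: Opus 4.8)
The plan is to verify the $\delta$-thin triangles condition directly, with $\delta = \ln 3$, using the explicit distance formula from Theorem~\ref{distthm} together with the explicit geodesics $\lambda_{p,q}$ from Theorem~\ref{minimizerthm}. Recall that a geodesic space is $\delta$-hyperbolic (in the thin-triangles sense) if for every geodesic triangle, each side is contained in the $\delta$-neighborhood of the union of the other two. Since $\hyp$ is a geodesic space whose geodesics are the doubly monotonic curves through the relevant minimal points, and since by Theorem~\ref{isometrythm} the dihedral group $D_4$ acts by isometries, I would first reduce to a fundamental-domain-type situation: up to the $D_4$-symmetry (reflections in the coordinate axes and the lines $x_2 = \pm x_1$), we may assume the vertices $p, q, r$ lie in a single closed octant, or at least exploit the symmetry to cut down the number of cases for the relative positions of the three vertices and their pairwise minimal points $m(p,q)$, $m(q,r)$, $m(p,r)$.

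The key computational step is to understand a single geodesic triangle. Each side $\lambda_{p,q}$ is a concatenation of two Euclidean segments meeting at $m(p,q)$, and along each such segment the induced distance is governed by the one-dimensional density $\frac{1}{1-t^2}$ in the ``taxicab-radial'' variable $\|x\|_T$, whose antiderivative is $\tanh^{-1}$. I would parametrize a point $x$ on side $\lambda_{p,q}$ by its value $s = \|x\|_T \in [\,\|m(p,q)\|_T,\ \max(\|p\|_T,\|q\|_T)\,]$ and then, for the nearest point $y$ on $\lambda_{p,r} \cup \lambda_{q,r}$, bound $d(x,y)$ using Theorem~\ref{distthm}: $d(x,y) = \tanh^{-1}\|x\|_T + \tanh^{-1}\|y\|_T - 2\tanh^{-1}\|m(x,y)\|_T$. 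The crucial observation to extract is that $\tanh^{-1}\|x\|_T - \tanh^{-1}\|m(x,y)\|_T$ is controlled, because for points on a common doubly monotonic segment the minimal point behaves predictably; the worst case should occur near a vertex, and the $\ln 3$ constant should emerge from an identity like $\tanh^{-1}(a) - \tanh^{-1}(b) \le \tfrac12\ln\frac{(1+a)(1-b)}{(1-a)(1+b)}$ evaluated at the extreme configuration — e.g. where $a \to 1$ relative to $b$ in a way capped by the $\|\cdot\|_T < 1$ constraint and the geometry of $m$. I would then identify the extremal triangle explicitly (I expect it to be a degenerate ``ideal'' triangle with vertices approaching corners of the square $\|x\|_T = 1$, such as near $(1,0)$, $(0,1)$, $(-1,0)$, or an isoceles configuration along a diagonal) and check that the bound $\ln 3$ is attained in the limit, establishing sharpness.

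The main obstacle, I expect, is the case analysis for the minimal points: $m(p,q)$ depends on the relative positions of $p$ and $q$ (same octant, adjacent octants, opposite octants), so a geodesic triangle can have sides of genuinely different combinatorial types, and the nearest-point projection of a point on one side onto the union of the other two must be analyzed piecewise. Organizing this cleanly — ideally by using the $D_4$ action plus monotonicity to collapse the number of essentially distinct triangle shapes to a small list, and by reducing the nearest-point estimate to the one-variable function $t \mapsto \tanh^{-1}(t)$ on segments — is where the real work lies. A secondary subtlety is confirming that $\hyp$ is genuinely geodesic in the required sense (so that the thin-triangles formulation applies), but this follows from Theorem~\ref{minimizerthm}, which gives existence of a length-minimizing curve $\lambda_{p,q}$ between any two points. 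Once the extremal configuration is pinned down, verifying $\delta = \ln 3$ and its optimality should be a direct $\tanh^{-1}$ computation.
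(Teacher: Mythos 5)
There is a genuine gap, and it is definitional before it is computational. The paper defines $\delta$-hyperbolicity via the four-point condition on the Gromov product, Inequality~\eqref{gromovineq}, whereas you propose to verify the Rips thin-triangles condition. These two formulations are equivalent only up to a multiplicative change in $\delta$ (passing between them costs a factor on the order of $2$ to $4$), so even a complete and sharp thin-triangles analysis with constant $\ln 3$ would neither prove nor disprove the statement ``$\hyp$ is $\ln(3)$-hyperbolic'' in the sense the paper asserts it; your proposed sharpness argument would likewise certify the extremal value of the wrong quantity. If you want to argue via triangles you must first prove $\hyp$ is geodesic (which, as you note, follows from Theorem~\ref{minimizerthm}) and then track the loss of constant in the equivalence, at which point $\ln 3$ is no longer available.

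You are also missing the simplification that makes the actual computation tractable. Taking the base point to be the origin, Theorem~\ref{distthm} gives $G(p,q;\theta) = r_{m(p,q)} = \tanh^{-1}\bigl(|m_1|+|m_2|\bigr)$ where $m = m(p,q)$, so the four-point inequality at $\theta$ reduces to comparing the radii of the three pairwise minimal points of $p$, $q$, $s$. This is a finite case analysis over the quadrant configurations of the three points, with worst case $\tanh^{-1}\left(\frac{1}{2}\right)$ occurring because $\|s\|_T < 1$ forces $\min\{s_1, s_2\} < \frac{1}{2}$ in the critical scenarios. The constant $\ln 3 = 2\tanh^{-1}\left(\frac{1}{2}\right)$ then comes from the base-point-change lemma of \cite{CDP}, which doubles $\delta$ when passing from a single base point to all base points, and sharpness is exhibited by the four points $(\pm t, \pm t)$ with $t \to \frac{1}{2}$ (approaching the midpoints of the edges of the square at infinity, not its corners). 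Your plan replaces this short radial computation with a nearest-point projection analysis that you correctly identify as the hard part but do not carry out, and it never engages with the origin-based Gromov product identity or the base-point lemma that together produce the exact constant.
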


As mentioned above, the norm used for $\hyp$ omits a factor of 2 that appears in the norm for the Poincar\'{e} ball.  If we were to reintroduce this factor of 2, distances between pairs of points $p$ and $q$ on a coordinate axis would be the same in both $\hyp$ and the usual Poincar\'{e}  ball.  This factor of 2 would also double the $\delta$ in Theorem~\ref{gromovhyperbolicthm}, so in this sense, $\hyp$ is somewhat less hyperbolic than usual hyperbolic space.

Theorems~\ref{minimizerthm} and \ref{distthm} show that $\hyp$ is a geodesic space, though for most pairs of points, geodesics are not unique.  Since $\hyp$ is not uniquely geodesic, it cannot be a CAT(0) space.

Additionally, as Theorem~\ref{mediancharacterizationthm} and Corollary~\ref{medspacecor} show, $\hyp$ is not a median space.  We show in Theorem~\ref{mediancharacterizationthm} that a given triple of points supports a median if and only if at least two of the three minimal points defined by this triple coincide, and from this, examples of triples of points that do or do not support medians can easily be generated.  A notable special case is that the minimal point $m(p, q)$ is the median for the set $\{p, q, \origin\}$, where throughout the paper, $\origin$ is the origin.

Despite the fact that $\hyp$ is not a median space, the Gromov hyperbolicity ensured by Theorem~\ref{gromovhyperbolicthm} implies that $\hyp$ is a coarse median space, as shown in \cite{Bowditchcoarse}.  Please see \cite{Bowditch} and the references therein for more on the connections between such spaces.

This paper is organized as follows:  In Section~\ref{preliminarysec} we introduce some preliminary constructions and results that support the remainder of the paper.  In Section~\ref{lengthsec} we prove Theorem~\ref{minimizerthm}, and with this, in Section~\ref{distanceisomsec} we prove Theorems~\ref{distthm}, and \ref{isometrythm}.  In Section~\ref{hyperbolicitysec} we explore the Gromov hyperbolicity of $\hyp$, proving Theorem~\ref{gromovhyperbolicthm}, and we find a characterization for which triples of points support a median.  Finally, we share some concluding remarks in Section~\ref{finalsec}.

\section{Preliminaries} \label{preliminarysec}

In this section, we introduce some terminology and discuss the analytical tools used to prove our results.

\subsection{Orthants, orthotopes, orthoplexes}

An $n$-orthant is a set of all points with the property that for each $i \in \{1, \ldots, n\}$, the $i^\mathrm{th}$ coordinates of all points share the same sign or are zero.   This generalizes quadrants in two dimensions and octants in three dimensions.  If no dimension is provided, the top dimension is implied.  Otherwise, the notation $k$-orthant will be used to indicate that for all but $k$ specific indices, the coordinates are zero for each point.

An orthotope is the higher dimensional generalization of a rectangle in two dimensions and a rectangular cuboid in three dimensions.  In this paper, only orthotopes with boundary faces lying in coordinate hyperplanes will be used, and we identify such an orthotope with a pair of vertices:
\[
R_{p, q} = \bigl\{x: \min\{p_i, q_i\} \leq x_i \leq \max\{p_i, q_i\} \bigr\}.
\]
This notation allows for the orthotope in question to be lower dimensional if some coordinates of $p$ and $q$ coincide.  In this instance, if $p$ and $q$ differ in $k$ coordinates, $R_{p, q}$ will be $k$-dimensional, and if this dimension is important, we will refer to $R_{p, q}$ as a $k$-orthotope.

An orthoplex is the higher dimensional generalization of an octahedron in three dimensions.  In this paper, only orthoplexes centered at the origin and with vertices lying on the coordinate axes and equidistant from the origin will be used.  Taxicab hyperbolic space itself is the interior of an orthoplex.  The symmetry group of an $n$ dimensional orthoplex is $H_n$.

\subsection{Minimal points} \label{minimalpointsubsec}

Given two real numbers $x$ and $y$, let
\[
\ell(x, y) =
\begin{cases}
\sgn(x) \min \{|x|, |y| \} & \mbox{if} \ \sgn(x) =  \sgn(y) \\
0 & \mbox{if} \ \sgn(x) \neq \sgn(y),
\end{cases}
\]
where $\sgn(x)$ is the sign of $x$:
\[
\sgn(x) =
\begin{cases}
1 & \mbox{if} \ x > 0 \\
-1 & \mbox{if} \ x < 0 \\
0 & \mbox{if} \ x = 0.
\end{cases}
\]
With this, given $p, q \in \hyp$ let $m(p, q) = \bigl(m_1(p, q), \ldots, m_n(p, q)\bigr)$ where
\[
m_i(p, q) = \ell(p_i, q_i).
\]
We call this point the minimal point associated to $p$ and $q$.
See Figure~\ref{msandlambdasfig} for examples of minimal points when $n = 2$.

\begin{figure}
\begin{picture}(260,260)
\put(0,10){
\includegraphics[scale = 1, clip = true, draft = false]{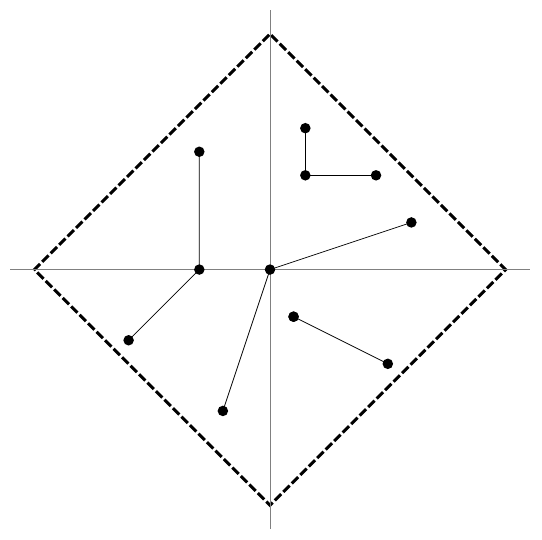}
}
\put(143,210){\scriptsize $p$}
\put(186,178){\scriptsize $q$}
\put(142,177){\scriptsize $m(p, q)$}
\put(103,200){\scriptsize $r$}
\put(69,101){\scriptsize $s$}
\put(72,144){\scriptsize $m(r, s)$}
\put(203,156){\scriptsize $u$}
\put(114,66){\scriptsize $v$}
\put(106,146){\scriptsize $m(u, v) = \origin$}
\put(140,122){\scriptsize $x = m(x, y)$}
\put(192,100){\scriptsize $y$}
\end{picture}
\caption{Some points in $\hyp^2$, the minimal points associated to various pairs of points, and the corresponding L-shaped curves. } \label{msandlambdasfig}
\end{figure}

Given two points $p$ and $q$ in the same orthant, we say $p$ lies beyond $q$ if $m(p, q) = q$.
We will find that one way to think about the minimal point for $p$ and $q$ is that it is the point furthest from the origin with the property that both $p$ and $q$ lie beyond it.

\subsection{Curves} \label{curvesubsec}

Unless otherwise stated, the domain for curves will be $[0, 1]$ and the codomain will be $\hyp$.  When we say ``let $\gamma$ be a curve from $p$ to $q$,'' we mean all components of $\gamma$ are absolutely continuous, $\gamma(0) = p$, and $\gamma(1) = q$.  The set of such curves is denoted $\Gamma(p,q)$.  If all components of a curve $\gamma$ are monotonic, we say $\gamma$ is fully monotonic.

Given two curves $\gamma$ and
$\varphi$ defined on $[a, b]$
with $\gamma(b) = \varphi(a)$, we define the concatenation $\gamma \ast \varphi$ as follows:
\begin{align*}
&\gamma \ast \varphi: [a, b] \longrightarrow \hyp \\
&\gamma \ast \varphi(t) =
\begin{cases}
\gamma(2t - a) \ &\mbox{if} \ a \leq t \leq \frac{a + b}{2} \\
\varphi(2t - b) \ &\mbox{if} \ \frac{a + b}{2}  < t \leq b.
\end{cases}
\end{align*}
We then extend this definition to curves with different domains by first applying a linear reparameterization to the second curve.

Given two points $p$ and $q$, we define the segment from $p$ to $q$ to be
\begin{align*}
&\sigma_{p, q}:[0, 1] \longrightarrow \hyp \\
&\sigma_{p, q}(t) = p+t (q - p).
\end{align*}
Given two points $p$ and $q$ and $m = m(p, q)$, we say the L-shaped curve from $p$ to $q$ is the curve
\begin{align*}
\lambda_{p, q}: [0, 1] \longrightarrow \hyp \\
\lambda_{p, q} = \sigma_{p, m} \ast \sigma_{m, q}.
\end{align*}
See Figure~\ref{msandlambdasfig} for examples of these curves when $n = 2$.

We define the length of a curve $\gamma:[a, b] \rightarrow \hyp$ as follows:
\[
\len(\gamma) = \int_a^b \|\gamma'(t)\|_{\hyp} \, dt
		= \int_a^b \frac{ \|\gamma'(t)\|_T }{1 - \|\gamma(t)\|_T^2}\, dt.
\]
Note that $\len(\gamma \ast \varphi) = \len(\gamma) + \len(\varphi)$.

Following \cite{BBI}, we consider the equivalence class of curves generated by declaring two curves $\gamma:[a_1,b_1] \rightarrow \hyp$ and $\eta:[a_2, b_2] \rightarrow \hyp$ to be equivalent if there exists an absolutely continuous monotonic change of variables $f:[a_1, b_1] \rightarrow [a_2, b_2]$ such that $\gamma = \eta \circ f$, and then extending to the finest equivalence class supporting these equivalencies.  We use $\sim$ to indicate this equivalence.  Some immediate consequences of this equivalence follow:  First, if two curves are equivalent, then they have the same length.  Second, through this equivalence, the specific domains and exact parameterizations for various curves are usually not important.  Third,  concatenation is associative: $\gamma \ast (\varphi \ast \eta) \sim (\gamma \ast \varphi) \ast \eta$.  Fourth, if $p$ lies beyond $q$ then, since $m(p, q) = q$, $\lambda_{p, q} =\sigma_{p, q} \ast \sigma_{q, q}$, so $\lambda_{p, q} \sim \sigma_{p, q}$.

We define the minimal point for a curve $\gamma$, $m(\gamma)$ analogously to the minimal point for two given points.  For a continuous function $f:[a, b] \rightarrow \mathbb{R}$, let
\[
\ell(f) = \sgn(f(a)) \min_{t \in [a, b]} \bigl| f(t) \bigr|.
\]
Note that the sign can be determined by any point in $[a, b]$ since, if the sign changes anywhere there must be a value where $f(t) = 0$ and so $\ell(f) = 0$.  With this, let $m(\gamma) = \bigl(m_1(\gamma), \ldots, m_n(\gamma)\bigr)$ where
\[
m_i(\gamma) = \ell(\gamma_i).
\]
Note that $m(\lambda_{p, q}) = m(p, q)$.

\subsection{Absolutely Continuous Functions}

As mentioned above, the curves under consideration will be parameterized using absolutely continuous functions.  We work at this level of regularity for a number of reasons:  First, absolutely continuous functions are differentiable almost everywhere, so the formula for the length of a curve makes sense.  Second, concatenation preserves absolute continuity.  Third, absolute continuity allows for substitution (change of variables) as an integration technique.  Fourth, if a function $f$ is absolutely continuous, then so is $|f|$.

In addition to these standard facts, absolute continuity is preserved under the following minimization processes, which are used to prove Theorem~\ref{minimizerthm}:  Given a continuous function $f:[a, b] \rightarrow \mathbb{R}$, we define the cumulative minimum function $\underline{f}:[a, b] \rightarrow \mathbb{R}$ and residual minimum function $\overline{f}:[a, b] \rightarrow \mathbb{R}$ as follows:
\[
\underline{f}(x) = \sgn\Bigl(f(a)\Bigr) \min_{t \in [a, x]} \, \bigl|f(t) \bigr|
\]
and
\[
\overline{f}(x) = \sgn\Bigl(f(b)\Bigr) \min_{t \in [x, b]} \, \bigl|f(t) \bigr|.
\]
See Figure~\ref{cumulativeandresidualminfig} for examples of such functions.

\begin{figure}
\begin{picture}(260,280)
\put(0,160){
\includegraphics[scale = 1, clip = true, draft = false]{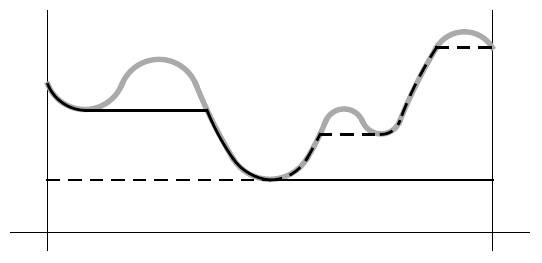}
}
\put(25,157){$a$}
\put(238,155){$b$}
\put(70,263){$f$}
\put(75,223){$\underline{f}$}
\put(80,188){$\overline{f}$}
\put(0,-10){
\includegraphics[scale = 1, clip = true, draft = false]{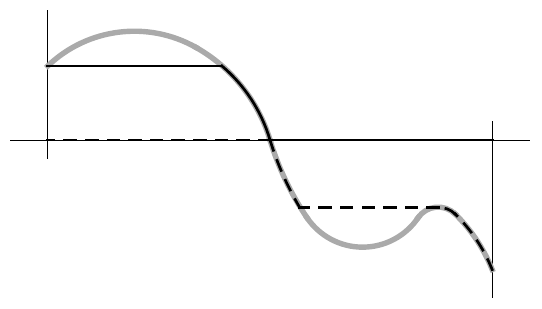}
}
\put(25,54){$a$}
\put(238,85){$b$}
\put(55,130){$f$}
\put(80,96){$\underline{f}$}
\put(170,45){$\overline{f}$}

\end{picture}
\caption{Absolutely continuous functions $f$ in gray, their cumulative minimum functions $\underline{f}$ in solid black, and their residual minimum functions $\overline{f}$ in dashed black.  In the top image, $f$ is positive.  In the bottom image, $f$ changes sign.} \label{cumulativeandresidualminfig}
\end{figure}

The following proposition collects the various technical facts we need about these functions.  The proofs are left to the reader.

\begin{prop} \label{minfunctionprop}
Let $f:[a, b] \rightarrow \mathbb{R}$ be absolutely continuous.
Then the cumulative minimum function $\underline{f}$ and residual minimum function $\overline{f}$ have the following properties:
\begin{itemize}
\item Both $\underline{f}$ and $\overline{f}$ are also absolutely continuous;
\item $\underline{f}$ and $\overline{f}$ are monotonic;
\item $\underline{f}(a) = f(a)$ and $\overline{f}(b) = f(b)$;
\item $\underline{f}(b) = \overline{f}(a)$;
\item for all $x \in [a, b]$, $\left| \underline{f}(x) \right| \leq |f(x)|$ and $\left| \overline{f}(x) \right| \leq |f(x)|$;
\item for all $x \in [a, b]$, $\left| \left( \underline{f} \right)'(x) \right| \leq \left| f'(x) \right|$ whenever both are defined and $\left| \left( \overline{f} \right)'(x) \right| \leq \left|f'(x) \right|$ whenever both are defined.
\item for all $x \in [a, b]$, if both $\left( \underline{f} \right)'(x)$ and $\left( \overline{f} \right)'(x)$ are defined, then one of them must be equal to zero.
\end{itemize}
\end{prop}

\section{Length minimizers} \label{lengthsec}

In this section, we determine the length minimizers for $\hyp$:

\medskip
\noindent \textbf{Theorem~\ref{minimizerthm}.} \emph{
For $p, q \in \hyp$ and $\gamma \in \Gamma(p, q)$, $\len(\gamma) \geq \len(\lambda_{p, q})$ with equality if and only if $\gamma$ is fully monotonic and passes through $m(p, q)$.
}
\medskip

Before proving Theorem~\ref{minimizerthm}, we establish a special case as a lemma.  This will be used in the proof and also plays a central role in establishing an explicit formula for length minimizers.

\begin{lemma} \label{dmqbeyondplemma}
Let $q$ lie beyond $p$ and let $\gamma \in \Gamma(p, q)$ be fully monotonic.  Then
\[
\len(\gamma) = \tanh^{-1} \bigl( \|q\|_T \bigr) - \tanh^{-1} \bigl( \|p\|_T \bigr).
\]
\end{lemma}

\begin{proof}
Define the new curve $\check{\gamma}$ with components $\check{\gamma}_i = | \gamma_i |$.
Note that this is a curve from
	$\bigl(|p_1|, |p_2|, \ldots |p_n| \bigr)$ to $\bigl(|q_1|, |q_2|, \ldots, |q_ n| \bigr)$ and that $\len(\check{\gamma}) = \len(\gamma)$.
Then
\begin{align*}
\len(\gamma) &= \len(\check{\gamma}) \\
			&= \int_0^1 \frac{\left| \left| \check{\gamma}'(t) \right| \right|_T}
				{1 - \left| \left| \check{\gamma}(t) \right| \right|_T^2}\, dt \\
			&= \int_0^1 \frac{ \sum_{i = 1}^n \check{\gamma}_i'(t)}
				{1 - \bigl( \sum_{i = 1}^n \check{\gamma}_i(t) \bigr)^2}\, dt \\
			&= \int_{\sum_{i = 1}^n \check{\gamma}_i(0) }
					^{\sum_{i = 1}^n \check{\gamma}_i(1) }
				\frac{1}{1-u^2}\, du \\
			&= \tanh^{-1}(u)\, \Bigr|_{\|p\|_T }
					^{\|q\|_T } \\
			&= \tanh^{-1} \bigl( \|q\|_T \bigr) - \tanh^{-1} \bigl( \|p\|_T \bigr).
\end{align*}
\end{proof}

The proof of Theorem~\ref{minimizerthm} now proceeds in four steps.  First, given a curve $\gamma$ from $p$ to $q$, we construct a new curve $\widetilde{\gamma}$ that is no longer than $\gamma$ and also passes through $m(\gamma)$.  Moreover while not fully monotonic itself, $\widetilde{\gamma}$ is the concatenation of two fully monotonic curves.  Second, a construction is performed to show that $\lambda_{p, q}$ is no longer than $\widetilde{\gamma}$.  Third, the lengths of all fully monotonic curves passing through $m(p, q)$ are shown to be the same.  Fourth, we show that no other curves have this length.

At first glance, the first and second steps may seem redundant with the fourth step.   We do need all four steps though, as the first and second steps show that L-shaped curves really are optimal, and the fourth step, which is both subtle and technical, shows that all optimal curves meet the given geometric criteria.

\begin{proof}[Proof of Theorem~\ref{minimizerthm}]

Step 1:  Define two new curves $\underline{\gamma}$ and $\overline{\gamma}$ by using the cumulative minimum and residual minimum coordinate-wise:
\[
\Bigl(\underline{\gamma}\Bigr)_i(t) = \underline{\gamma_i}(t)\ \mbox{and}\ 
	\Bigl(\overline{\gamma}\Bigr)_i(t) = \overline{\gamma_i}(t).
\]
By Proposition~\ref{minfunctionprop}, for each $i$, $\Bigl(\underline{\gamma}\Bigr)_i(1) = \Bigl(\overline{\gamma}\Bigr)_i(0)$, so the concatenation $\widetilde{\gamma} = \underline{\gamma} \ast \overline{\gamma}$ is well defined.  See Figure~\ref{adjustedcurvefig} for examples when $n = 2$.

\begin{figure}
\begin{picture}(360,310)
\put(0,10){
\put(0,160){
\includegraphics[scale = .65, clip = true, draft = false]{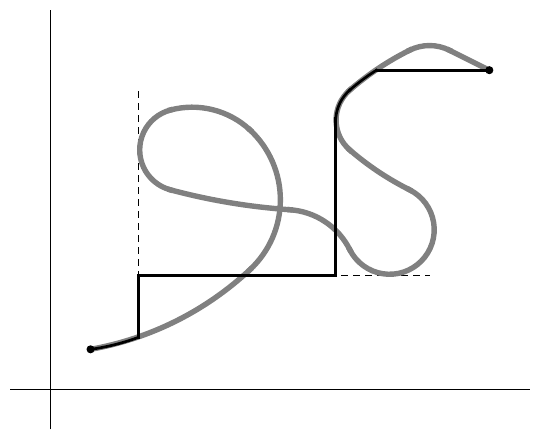}
}
\put(162,274){$p$}
\put(28,181){\footnotesize{$q = m(p, q) = m(\gamma)$}}
\put(145,224){$\gamma$}
\put(85,203){\footnotesize{$\underline{\gamma} \sim \widetilde{\gamma}$}}
\put(90,160){(a)}

\put(190,160){
\includegraphics[scale = .65, clip = true, draft = false]{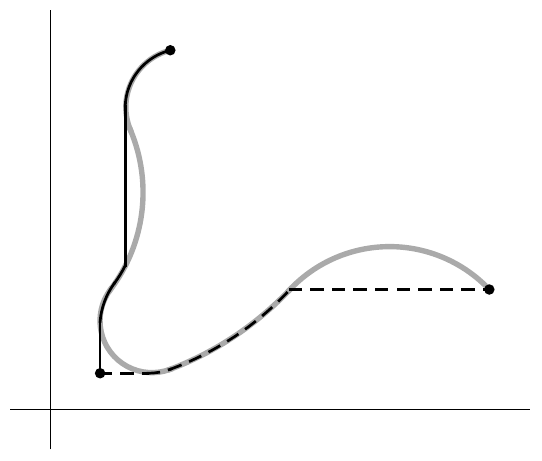}
}
\put(252,287){$p$}
\put(351,210){$q$}
\put(211,179){\footnotesize{$m(\gamma)$}}
\put(243,241){$\gamma$}
\put(222,240){$\underline{\gamma}$}
\put(315,204){$\overline{\gamma}$}
\put(270,160){(b)}

\put(0,20){
\includegraphics[scale = .65, clip = true, draft = false]{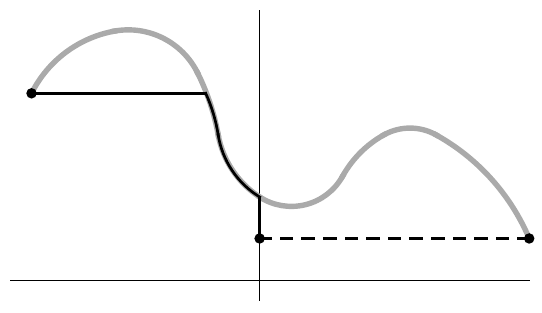}
}
\put(4,82){$p$}
\put(172,44){$q$}
\put(13,42){$m(p, q) = m(\gamma)$}
\put(128,83){$\gamma$}
\put(38,81){$\underline{\gamma}$}
\put(130,48){$\overline{\gamma}$}
\put(90,-5){(c)}

\put(185,0){
\includegraphics[scale = .65, clip = true, draft = false]{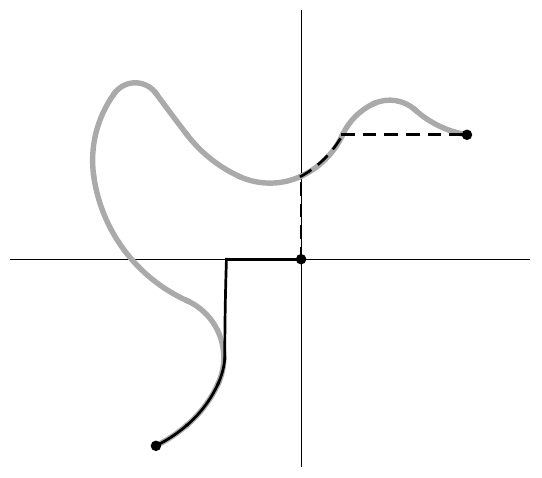}
}
\put(227,4){$p$}
\put(339,110){$q$}
\put(276,58){$m(p, q) = m(\gamma) = \bf{0}$}
\put(207,95){$\gamma$}
\put(262,52){$\underline{\gamma}$}
\put(311,98){$\overline{\gamma}$}
\put(270,-5){(d)}
}

\end{picture}
\caption{Various examples when $n = 2$ of curves $\gamma$ in gray, from $p$ to $q$, and the adjustments $\underline{\gamma}$, solid black and $\overline{\gamma}$, dashed black, which are concatenated to produce $\widetilde{\gamma}$.  In (a), $p$ and $q$ lie in the same quadrant with $p$ lying beyond $q$.   In this example, $\overline{\gamma}$ is stationary so $\widetilde{\gamma} \sim \underline{\gamma}$.  In (b), $p$ and $q$ lie in the same quadrant and neither point is beyond the other.  In (c) $p$ and $q$ lie in adjacent quadrants and it happens that $m(p, q) = m(\gamma)$ which need not always occur.  In $(d)$, $p$ and $q$ lie in opposite quadrants.  Again, $m(p, q) = m(\gamma)$ which turns out always to be true when $p$ and $q$ lie in opposite orthants.
} \label{adjustedcurvefig}
\end{figure}

Also by Proposition~\ref{minfunctionprop}, $\widetilde{\gamma}(0) = p$ and $\widetilde{\gamma}(1) = q$.  Moreover
\[
\left| \underline{\gamma_i}(t) \right| \leq |\gamma_i(t)|\ \mbox{and}\ \left| \overline{\gamma_i}(t) \right| \leq |\gamma_i(t)|
\]
for all $t \in [0, 1]$, so that
\[
\frac{1}{1 - \left| \left|\underline{\gamma}(t)\right|\right|_T ^2}
	\leq \frac{1}{1 -  \|\gamma(t)\|_T ^2}
\]
and
\[
\frac{1}{1 - \left| \left|\overline{\gamma}(t)\right| \right|_T ^2}
	\leq \frac{1}{1 -  \|\gamma(t)\|_T ^2}.
\]
Also by Proposition~\ref{minfunctionprop},
\[
|\underline{\gamma}_i'(t)| + |\overline{\gamma}_i'(t)| \leq |\gamma_i'(t)|
\]
whenever all three terms are defined.

These estimates imply
\begin{align*}
\len(\widetilde{\gamma}) &= \len(\underline{\gamma}) + \len(\overline{\gamma}) \\
				&= \int_0^1 \frac{\left|\left|  \underline{\gamma}'(t)\right| \right|_T}
								{1 -   \left|\left| \underline{\gamma}(t)\right|\right|_T^2} \, dt
					+ \int_0^1 \frac{\left| \left|\overline{\gamma}'(t)\right| \right|_T}
								{1 -   \left| \left| \overline{\gamma}(t)\right| \right|_T ^2} \, dt \\
				& \leq  \int_0^1 \frac{\left|\left| \underline{\gamma}'(t)\right| \right|_T}
								{1 -   \left| \left|\gamma(t)\right| \right|_T ^2} \, dt
					+ \int_0^1 \frac{\left|\left| \overline{\gamma}'(t)\right| \right|_T}
								{1 -  \left|\left| \gamma(t) \right| \right|_T ^2} \, dt\\
				& =  \int_0^1 \frac{\sum_{i = 1}^n \bigl(|\underline{\gamma}_i'(t)|
												+ |\overline{\gamma}_i'(t)|\bigr)}
								{1 -  \left| \left|\gamma(t) \right| \right|_T ^2} \, dt \\
				& \leq \int_0^1 \frac{\left| \left| \gamma'(t)\right| \right|_T}
								{1 -  \left| \left| \gamma(t) \right| \right|_T ^2} \, dt \\
				& = \len(\gamma).
\end{align*}

Step 2:  Consider the curves
\begin{align*}
\alpha &= \sigma_{p, m(p, q)} \ast \sigma_{m(p, q), m(\gamma)} \\
\beta &= \sigma_{m(\gamma), m(p, q)} \ast \sigma_{m(p, q), q} \\
\varphi &= \alpha \ast \beta.
\end{align*}
Note that $\alpha$ and $\beta$ are both fully monotonic since $m(p, q)$ lies beyond $m(\gamma)$.  Also, by Proposition~\ref{minfunctionprop}, $\underline{\gamma}$ and $\overline{\gamma}$ are both fully monotonic.  By Lemma~\ref{dmqbeyondplemma}, this implies that $\len(\alpha) = \len(\underline{\gamma})$ and $\len(\beta) = \len(\overline{\gamma})$ so $\len(\varphi) = \len(\widetilde{\gamma})$.  Hence,
\begin{align*}
\len(\lambda_{p, q}) &= \len(\sigma_{p, m(p, q)} \ast \sigma_{m(p, q), q}) \\
				&\leq \len(\sigma_{p, m(p, q)} \ast \sigma_{m(p, q), m(\gamma)}
							\ast \sigma_{m(\gamma), m(p, q)} \ast \sigma_{m(p, q), q}) \\
				&= \len(\varphi) \\
				&= \len(\widetilde{\gamma})
\end{align*}
Note that the inequality on the second line will be strict if $m(\gamma) \neq m(p, q)$.  This fact will be helpful in the fourth step.  See Figure~\ref{curveprogressionfig} for an example of the sequence of adjustments described here.

\begin{figure}
\begin{picture}(360,300)
\put(0,0){
\put(0,160){
\includegraphics[scale = .65, clip = true, draft = false]{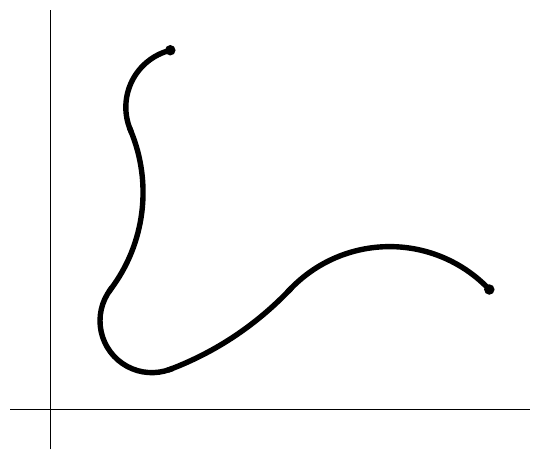}
}
\put(62,287){$p$}
\put(161,210){{$q$}}
\put(89,221){$\gamma$}
\put(175,240){$\longrightarrow$}

\put(190,160){
\includegraphics[scale = .65, clip = true, draft = false]{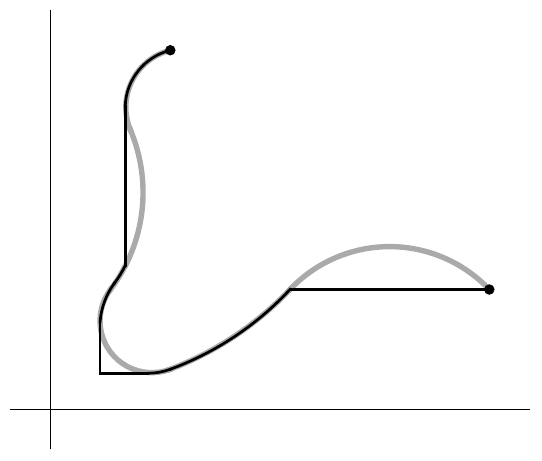}
}
\put(252,287){$p$}
\put(351,210){$q$}
\put(295,204){$\widetilde{\gamma}$}

\put(175,150){\rotatebox[origin=c]{225}{$-\!\!-\!\!\!\longrightarrow$}}

\put(0,0){
\includegraphics[scale = .65, clip = true, draft = false]{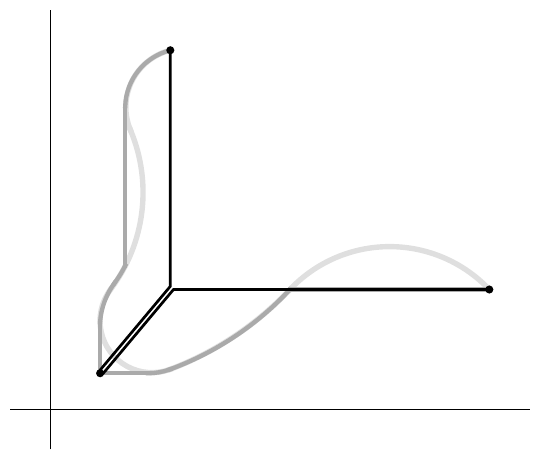}
}
\put(62,127){$p$}
\put(161,50){$q$}
\put(62,70){$\varphi$}
\put(21,19){\footnotesize{$m(\gamma)$}}

\put(175,80){$\longrightarrow$}

\put(190,0){
\includegraphics[scale = .65, clip = true, draft = false]{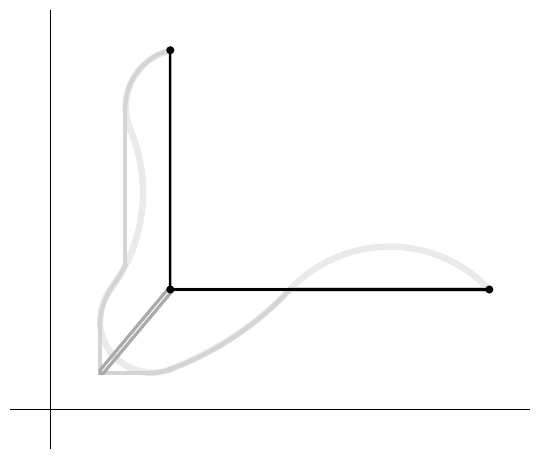}
}
\put(252,127){$p$}
\put(351,50){$q$}
\put(252,80){$\lambda_{p, q}$}
\put(245,45){\footnotesize{$m(p, q)$}}
}

\end{picture}
\caption{An example when $n = 2$ showing the sequence of steps adjusting the curve $\gamma$ through $\widetilde{\gamma}$ and $\varphi$ to $\lambda_{p, q}$.  In each step, the length of the new curve is no greater than that of the previous curve.  In the third step, the portions of $\varphi$ from $m(p, q)$ to $m(\gamma)$ and back are separated slightly for visual effect.} \label{curveprogressionfig}
\end{figure}

Step 3:  Let $\gamma: [0, 1] \rightarrow \hyp$ be any fully monotonic curve from $p$ to $q$ and passing through $m = m(p, q)$ at time $t^*$.  Let $\gamma^1$ and $\gamma^2$ be the restrictions of $\gamma$ to $[0, t^*]$ and $[t^*, 1]$ respectively so that  $\gamma \sim \gamma^1 \ast \gamma^2$.  Then by Lemma~\ref{dmqbeyondplemma}, since $p$ and $q$ each lie beyond $m$,
\begin{align*}
\len(\gamma) &= \len(\gamma^1) + \len(\gamma^2) \\
			&= \tanh^{-1} \bigl( \|p\|_T \bigr) - \tanh^{-1} \bigl( \|m\|_T \bigr).
					+ \left[ \tanh^{-1} \bigl( \|q\|_T \bigr) - \tanh^{-1} \bigl( \|m\|_T \bigr) \right] \\
			&= \tanh^{-1} \bigl( \|p\|_T \bigr) + \tanh^{-1} \bigl( \|q\|_T \bigr)
				- 2 \tanh^{-1} \bigl( \|m\|_T \bigr)
\end{align*}
As such, all fully monotonic curves from $p$ to $q$ and passing through $m$ have the same length.

Step 4:  It must be shown that if $\gamma$ is not fully monotonic or if $\gamma$ does not pass through $m(p, q)$ then $\len(\gamma) > \len(\lambda_{p, q})$.  Consider three cases:

Case 1:  Suppose $\gamma$ is not fully monotonic and in particular there exists a coordinate $i$ and an interval $[a, b]$ such that $|\gamma_i(a)| = |\gamma_i(b)|$ and for all $t \in (a, b)$, $|\gamma_i(t)| > |\gamma_i(a)|$.  Consider the curve $\check{\gamma}$  with components defined as follows:
\[
\check{\gamma}_j(x) =
\begin{cases}
\gamma_j(t) & \mbox{if} \ j \neq i\ \mbox{or\ if}\ j = i\ \mbox{and}\ t \notin [a, b], \\
\gamma_i(a) & \mbox{if} \ j = i\ \mbox{and}\ t \in [a, b].
\end{cases}
\]
Then $\check{\gamma}$ is a curve from $p$ to $q$ and, since $\gamma_i$ is absolutely continuous, there exists a set of positive measure in $[a, b]$ where $|\gamma_i'(t)| > 0$ so that
\[
0 = \int_a^b \frac{\left| \check{\gamma}_i'(t) \right|}{1 - \|\check{\gamma}(t)\|_T^2}\, dt
	< \int_a^b \frac{\left| \gamma_i'(t) \right|}{1 - \|\check{\gamma}(t)\|_T^2}\, dt
	\leq \int_a^b \frac{\left| \gamma_i'(t) \right|}{1 - \|\gamma(t)\|_T^2}\, dt.
\]
Hence, with the help of Steps 1 and 2 above,
\begin{align*}
\len(\lambda_{p, q}) &\leq \len(\check{\gamma}) \\
			&= \int_0^a  \frac{\left\| \gamma'(t) \right\|_T}{1 - \|\gamma(t)\|_T^2}\, dt
				+ \sum_{j = 1}^n
					\int_a^b  \frac{\left| \check{\gamma}_j'(t) \right|}{1 - \|\check{\gamma}(t)\|_T^2}\, dt
				+ \int_b^1  \frac{\left\| \gamma'(t) \right\|_T}{1 - \|\gamma(t)\|_T^2}\, dt \\
			&< \int_0^a  \frac{\left\| \gamma'(t) \right\|_T}{1 - \|\gamma(t)\|_T^2}\, dt
				+ \sum_{j = 1}^n
					\int_a^b  \frac{\left| \gamma_j'(t) \right|}{1 - \|\gamma(t)\|_T^2}\, dt
				\int_b^1  \frac{\left\| \gamma'(t) \right\|_T}{1 - \|\gamma(t)\|_T^2}\, dt \\
			&= \len(\gamma).
\end{align*}

Case 2:  Suppose $\gamma$ is not fully monotonic, but does not meet the criteria for Case~1.  Then $m(p, q)$ must lie beyond $m(\gamma)$ so the inequality in Step 2 above is strict and again $\len (\lambda_{p, q}) < \len(\gamma)$.

Case 3:  Suppose $\gamma$ is fully monotonic, but does not pass through $m(p, q)$.  Let $T_i = \{ t \in [0, 1]: \gamma_i(t) = m_i(p, q)\}$.  Since $\gamma$ is fully monotonic, each $T_i$ is an interval and since $\gamma$ does not pass through $m(p, q)$, $\bigcap_{i = 1}^n T_i = \emptyset$.  Hence, by Helly's theorem, there must be some pair of these intervals with an empty intersection.  Without loss of generality, suppose $T_1 \cap T_2 = \emptyset$.  Let $T_1 = [a, b]$ and $T_2 = [c, d]$, and again without loss of generality, suppose $b < c$.  Note that, since $\gamma_i$ is monotonic, if there is a time when $\gamma_i(t) = 0$, then $m(p, q) = 0$.  Hence, neither $\gamma_1$ nor $\gamma_2$ can be equal to zero on the interval $(b, c)$, so again without loss of generality, suppose that both $\gamma_1$ and $\gamma_2$ are positive on $(b, c)$.  Moreover, in this setting, $\gamma_1$ must be increasing and $\gamma_2$ must be decreasing.

Now let $\alpha = \underline{\gamma|_{[b, c]}}$, let $\beta = \overline{\gamma|_{[b, c]}}$ and consider the new curve
%( to match below
\[
\check{\gamma}(t) = 
\begin{cases}
\gamma(t) &\mbox{if}\  t \in [0, b) \\
\alpha \ast \beta(t) &\mbox{if}\  t \in [b, c] \\
\gamma(t) &\mbox{if}\  t \in (c, 1]. \\
\end{cases}
\]
%) to match above

For $t \in (b, c)$, since $|\alpha_1(t)| < |\gamma_1(t)|$, it follows that $\|\alpha(t) \|_T < \|\gamma(t) \|_T$.  Similarly, $\|\beta(t) \|_T < \| \gamma(t) \|_T$.  Focusing on the derivatives, $\alpha_1$ and $\beta_2$ are both constant, but since $\gamma_2$  must be non-constant somewhere on the interval $[b, c]$, there must be a set of positive measure where $\gamma_2'$ is non-zero.  Hence, on this set, since $|\alpha_2'(t)| \leq |\gamma_2'(t)|$,
\[
\frac{|\alpha_2'(t)|}{1 - \|\alpha(t) \|_T^2} < \frac{|\gamma_2'(t) |}{1 - \|\gamma(t) \|_T^2}
\]
and as a consequence
\[
\int_b^c \frac{|\alpha_2'(t)|}{1 - \|\alpha(t) \|_T^2}\, dt < \int_b^c \frac{|\gamma_2'(t) |}{1 - \|\gamma(t) \|_T^2}\, dt.
\]
Similarly,
\[
\int_b^c \frac{|\beta_1'(t) |}{1 - \|\beta(t) \|_T^2}\, dt < \int_b^c \frac{|\gamma_1'(t) |}{1 - \|\gamma(t) \|_T^2}\, dt.
\]
With these strict inequalities, and with the help of steps 1 and 2 above, we have
\begin{align*}
\len(\alpha &\ast \beta) \\
		&= \len(\alpha) + \len(\beta) \\
		&= \int_b^c \frac{\| \alpha'(t) \|_T}{1 - \| \alpha(t) \|_T^2}\, dt
			+ \int_b^c \frac{\| \beta'(t) \|_T}{1 - \| \beta(t) \|_T^2}\, dt \\
		&= \int_b^c \frac{|\alpha_1'(t)| + |\alpha_2'(t)| + \sum_{i=3}^n |\alpha_i'(t)|}{1 - \|\alpha(t)\|_T^2}\, dt
			+  \int_b^c \frac{|\beta_1'(t)| + |\beta_2'(t)| + \sum_{i=3}^n |\beta_i'(t)|}{1 - \|\beta(t)\|_T^2}\, dt \\
		& = \int_b^c \frac{0}{1 - \|\alpha(t)\|_T^2}\, dt + \int_b^c \frac{|\alpha_2'(t)|}{1 - \|\alpha(t)\|_T^2}\, dt
			+ \int_b^c \frac{\sum_{i = 3}^n |\alpha_i'(t)|}{1 - \|\alpha(t)\|_T^2}\, dt \\
		& \qquad \qquad	+ \int_b^c \frac{|\beta_1'(t)|}{1 - \|\beta(t)\|_T^2}\, dt
					+ \int_b^c \frac{0}{1 - \|\beta(t)\|_T^2}\, dt
					+ \int_b^c \frac{\sum_{i = 3}^n |\beta_i'(t)|}{1 - \|\beta(t)\|_T^2}\, dt \\
		& < \int_b^c \frac{|\gamma_1'(t) |}{1 - \|\gamma(t) \|_T^2}\, dt
			+ \int_b^c \frac{|\gamma_2'(t) |}{1 - \|\gamma(t) \|_T^2}\, dt \\
		& \qquad \qquad \qquad + \int_b^c \frac{\sum_{i = 3}^n |\alpha_i'(t)|}{1 - \|\alpha(t)\|_T^2}\, dt
				+ \int_b^c \frac{\sum_{i = 3}^n |\beta_i'(t)|}{1 - \|\beta(t)\|_T^2}\, dt \\
		& \leq \int_b^c \frac{|\gamma_1'(t) |}{1 - \|\gamma(t) \|_T^2}\, dt
			+ \int_b^c \frac{|\gamma_2'(t) |}{1 - \|\gamma(t) \|_T^2}\, dt \\
		& \qquad \qquad \qquad + \int_b^c \frac{\sum_{i = 3}^n |\alpha_i'(t)|}{1 - \|\gamma(t)\|_T^2}\, dt
				+ \int_b^c \frac{\sum_{i = 3}^n |\beta_i'(t)|}{1 - \|\gamma(t)\|_T^2}\, dt \\
		& = \int_b^c \frac{|\gamma_1'(t) |}{1 - \|\gamma(t) \|_T^2}\, dt
			+ \int_b^c \frac{|\gamma_2'(t) |}{1 - \|\gamma(t) \|_T^2}\, dt
			 + \int_b^c \frac{\sum_{i = 3}^n |\alpha_i'(t)| + |\beta_i'(t)|}{1 - \|\gamma(t)\|_T^2}\, dt \\
		& \leq \int_b^c \frac{|\gamma_1'(t) |}{1 - \|\gamma(t) \|_T^2}\, dt
			+ \int_b^c \frac{|\gamma_2'(t) |}{1 - \|\gamma(t) \|_T^2}\, dt
			 + \int_b^c \frac{\sum_{i = 3}^n |\gamma_i'(t)|}{1 - \|\gamma(t)\|_T^2}\, dt \\
		& = \int_b^c \frac{ \| \gamma'(t) \|_T}{1 - \| \gamma(t) \|_T^2}\, dt \\
		& = \len \left( \gamma|_{[b, c]} \right)
\end{align*}
and therefore
\begin{align*}
\len(\lambda_{p, q}) &\leq \len(\check{\gamma}) \\
				&= \len \left(\check{\gamma}|_{[0, b]} \right)
					+  \len \left(\check{\gamma}|_{[b, c]} \right)
					+ \len \left(\check{\gamma}|_{[c, 1]} \right) \\
				&=  \len \left({\gamma|_{[0, b]}} \right)
					+  \len \left({\alpha \ast \beta} \right)
					+  \len \left({\gamma|_{[c, 1]}} \right) \\
				&<  \len \left({\gamma|_{[0, b]}} \right)
					+  \len \left({\gamma|_{[b, c]}}  \right)
					+  \len \left({\gamma|_{[c, 1]}} \right) \\
				&= \len(\gamma).
\end{align*}

\end{proof}

\section{Distance and isometries} \label{distanceisomsec}

In this section, we use the length minimizers to define a distance function on the taxicab Poincar\'{e} ball
and determine the isometries of $\hyp$.

\subsection{Distance function for $\hyp$}

Here we prove Theorem~\ref{distthm}.

\medskip
\noindent \textbf{Theorem \ref{distthm}.}
\emph{
The distance function on $\hyp$ arising from the length functional $\len$ is
\begin{equation}  \label{BTdisteq}
d(p, q) = \tanh^{-1} \bigl( \|p\|_T \bigr) + \tanh^{-1} \bigl( \|q\|_T \bigr)
				- 2 \tanh^{-1} \bigl( \|m\|_T \bigr)
\end{equation}
where $m = m(p, q)$.
}
\medskip

\begin{proof}
By Theorem~\ref{minimizerthm}, $d(p, q) = \len(\lambda_{p, q})$ and Equation~\eqref{BTdisteq} is computed in the third step of the proof of  Theorem~\ref{minimizerthm}.
\end{proof}

Note that $d(p, \origin) = \tanh^{-1}\left(\|p\|_T \right)$, so Equation~\eqref{BTdisteq} can be rewritten as follows:
\begin{equation} \label{TPDdistrelationeq}
d(p, q) = d(p, \origin) + d(q, \origin) - 2 d(m, \origin).
\end{equation}

\subsection{Spheres}
The distance function for $\hyp$ could be used to characterize spheres in this space.  In taxicab space, taxicab spheres are the boundaries of orthoplexes, while in the usual Poincar\'{e} ball, spheres are Euclidean spheres, although the hyperbolic center does not coincide with the Euclidean center.  Unfortunately, most spheres in $\hyp$ are not taxicab spheres, and, while spheres centered at points other than the origin could be characterized, with explicit equations determined for those points that lie in the sphere, a detailed characterization is fairly cumbersome and not necessary for this paper.  That being said, spheres centered at the origin do provide some geometric insight and utility.

\begin{lemma} %\label{originsphereslemma}
The spheres centered at the origin are orthoplexes.
\end{lemma}

\begin{proof}
The formula for a sphere of radius $r$ centered at the origin is $d(p, \origin) = \tanh^{-1}\! \left(\|p\|_T \right) = r$ which can be rewritten
\[
\|p\|_T = \tanh(r),
\]
which in turn is the formula of the taxicab sphere in $\mathbb{R}^n$ with radius $\tanh(r)$ centered at $\origin$, and hence an orthoplex.
\end{proof}

Let $r_p$ be the radius of the sphere centered at $\origin$ that contains $p$.  Note that, using this notation, Equation~\eqref{TPDdistrelationeq} becomes
\begin{equation} \label{TPDradiuseqn}
d(p, q) = r_p + r_q - 2 r_m.
\end{equation}

\subsection{Intervals}

We define the interval of $p$ and $q$, denoted $[p, q]$ as follows:
\[
[p, q] = \{x \in \hyp: d(p, q) = d(p, x) + d(x, q)\}.
\]

Note that, in general, intervals are isometry invariant:  if $\Psi$ is an isometry then $\Psi \bigl( [p, q] \bigr) = \bigl[ \Psi(p), \Psi(q) \bigr]$.  For $\hyp$ in particular, their geometry can be characterized explicitly.  Together, Theorems~\ref{minimizerthm} and \ref{distthm} imply the following lemma, the proof of which is left to the reader.

\begin{lemma} \label{intervalcharlemma}
Given two points $p$ and $q$ in $\hyp$, let $m = m(p, q)$.  Then
\[
[p, q] = R_{p, m} \cup R_{m, q}.
\]
\end{lemma}
See Figure~\ref{intervalfig} for some examples of these intervals when $n = 2$.  By way of comparison, intervals in $(\mathbb{R}^n, d_T)$ are orthotopes:  $[p, q]_T = R_{p, q}$, and intervals in the Poincar\'{e} Ball are arcs of circles perpendicular the hypersphere at infinity.

\begin{figure}
\begin{picture}(260,240)
\put(0,-10){
\includegraphics[scale = 1, clip = true, draft = false]{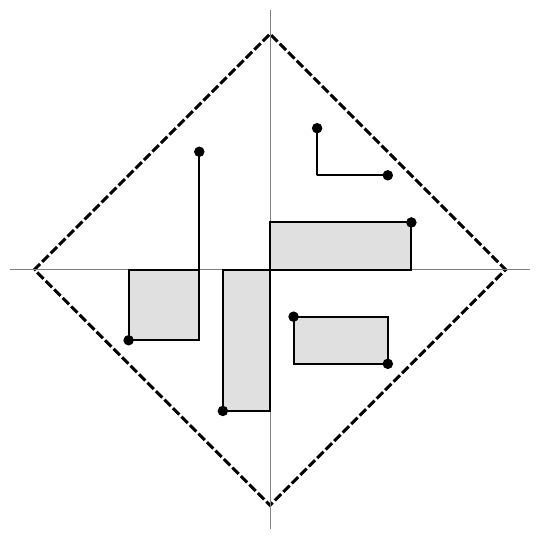}
}

\end{picture}
\caption{Various intervals in $\hyp^2$.
} \label{intervalfig}
\end{figure}

Lemma~\ref{intervalcharlemma} implies the following facts that will be useful in the proof of Theorem~\ref{isometrythm}:  First, in $\hyp^n$ if $p$ lies in a given $k$-orthant, the interval $[0, p]$ is a $k$-orthotope.  Second, for any point $p$ other than the origin, the set of all intervals $[p, q]$ includes sets that are not orthotopes.

\subsection{Isometries} \label{isometrysubsec}

Here we prove Theorem~\ref{isometrythm}:

\medskip
\noindent \textbf{Theorem \ref{isometrythm}.} \emph{
The isometry group for $\hyp^n$ is isomorphic to $H_n$.
}
\medskip

\begin{proof}
The hyperoctahedral group can be thought of as acting on $\hyp$, generated by permutations of the coordinates and reflections across coordinate hyperplanes containing the origin.  The fact that these transformations are isometries is left to the reader, as is the fact that they generate the group $H_n$.

In the other direction, we proceed in five steps.  First, by Lemma~\ref{intervalcharlemma}, since the origin is the only point with the property that all intervals $[0, p]$ are othotopes, any isometry $\Psi$ must map the origin to itself.

Second, if $p$ lies in a $k$-orthant, $\Psi(p)$ must as well since $\Psi([0, p]) = [0, \Psi(p)]$ must both be $k$-orthotopes.

Third, suppose $\Psi$ maps a $1$-orthant to itself.  Then $\Psi$ is the identity map on that orthant.  This follows directly from the fact that distance is strictly monotonic.

Fourth, coordinate axes must map to coordinate axes.  (They cannot be bent.)  To see this, by way of contradiction, suppose, without loss of generality, that $\Psi$ fixes all positive axes and that $\Psi$ maps the negative $x_2$-axis to the negative $x_1$-axis.  Consider the curve $\sigma_{p,q}$ where $p = (1, 0, 0, \ldots, 0)$ and $q = (0, -1, 0, \ldots, 0)$.  Then $\Psi(p) = p$ and $\Psi(q) = -p$.  Note that for $t \in (0, 1)$, the intervals $[0, \sigma_{p, q}(t)]$ are all 2-orthotopes., and hence $[0, \Psi(\sigma_{p, q}(t))]$ must all be 2-orthotopes.  However, every curve from $p$ to $-p$ must either pass through a coordinate axis, creating a 1-orthotope at that time, or it must leave the coordinate planes passing through the origin, creating $k$-orthotopes where $k \geq 3$.

Fifth, suppose $\Psi$ is the identity on all $1$-orthants.  Then $\Psi$ is the identity on all of $\hyp^n$.  To see this, let $\pi_i: \hyp^n \rightarrow \hyp^n$ be projection onto the $i^{\mathrm{th}}$ coordinate.  For any point $p$, let $q = \Psi(p)$.  Then $[0, p]$ and $\Psi\bigl([0, p]\bigr) = [0, q]$ are orthotopes which contain $[0, \pi_i(p)]$ and $[0, \pi_i(q)]$ respectively for all $i \in \{1, \ldots, n\}$.  But then
\[
\bigl[0, \pi_i(p)\bigr] = \Psi\Bigl(\bigl[0, \pi_i(p) \bigr] \Bigr)
				= \bigl[0, \pi_i(q) \bigr]
\]
and so $\pi_i(p) = \pi_i(q)$ which in turn implies that $p = q$.

\end{proof}

\section{Hyperbolicity and medians} \label{hyperbolicitysec}

Here, we determine the extent to which $\hyp$ is hyperbolic and explore its median structure.

\subsection{Gromov hyperbolicity}

Since we are not working with curvature, we instead work with Gromov hyperbolicity.

\medskip
\noindent \textbf{Theorem \ref{gromovhyperbolicthm}.} \emph{
$\hyp$ is $\ln(3)$-hyperbolic and this value of $\delta$ is optimal.
}
\medskip

Before proving Theorem~\ref{gromovhyperbolicthm} we introduce some notation and structure.

In \cite{CDP}, it is shown that if Inequality~\eqref{gromovineq} holds for a single base point $w_0$ and $\delta = \delta_0$, then it holds for all $w$ and at most $\delta = 2 \delta_0$.  With this in mind, let $w_0 = \origin$ and note that by Equation~\eqref{TPDradiuseqn}
\begin{align*}
G(p, q; \origin) &= \frac{1}{2} \left[ d(p, \origin) + d(q, \origin) - d(p, q) \right] \\
		&= \frac{1}{2} \left[ r_p + r_q - \left(r_p + r_q - 2 r_{m(p, q)}\right) \right] \\
		&= r_{m(p, q)}.
\end{align*}
Our goal will then be to show that for any three points $p$, $q$, and $s$
\[
G(p, q; \origin) \geq \min \left\{ G(p, s; \origin), G(q, s; \origin) \right\} - \frac{1}{2} \ln(3)
\]
which, using the formula above and rearranging, can be written
\[
\min \left\{ r_{m(p, s)}, r_{m(q, s)} \right\} - r_{m(p, q)} \leq \frac{1}{2} \ln(3).
\]
As such, letting
\[
h(p, q, s) = \min \left\{ r_{m(p, s)}, r_{m(q, s)} \right\} - r_{m(p, q)}
\]
we want to show that
\[
\sup_{p, q, s \in \hyp^n} h(p, q, s) = \frac{1}{2} \ln(3).
\]

To help with some of the technical aspects in the proof, given a set $S$ of numbers, we say a number in $S$ has a unique sign if it is nonzero and its sign differs from the signs of all the other numbers in the set.  Then, given three points $p$, $q$, and $s$ in $\hyp^n$, we define the adjusted points $p^*$, $q^*$, and $s^*$ as follows.  For $i \in \{1, \ldots, n\}$, consider the three-element set $C_i =  \{p_i, q_i, s_i\}$. If a number in this set has a unique sign, adjust the corresponding point by replacing the $i$th coordinate with zero.  Otherwise, if all nonzero values in $C_i$ have the same sign, make no adjustment.

Note that, since $C_i$ is a three-element set, either at least one element has a unique sign or all nonzero elements have the same sign.  Using this adjustment technique, we have the following:

\begin{lemma} \label{hadjustlemma}
Given three points $p$, $q$, and $s$ in $\hyp^n$, let $p^*$, $q^*$ and $s^*$ be the adjusted points by the method described above.  Then $m(p^*, q^*) = m(p, q)$, $m(p^*, s^*) = m(p, s)$, and $m(q^*, s^*) = m(q, s)$, and hence
\[
h(p^*, q^*, s^* ) = h(p, q, s).
\]
\end{lemma}

\begin{proof}
Recall that for any two points $x$ and $y$ in $\hyp^n$, and any index $i \in \{1, \ldots, n\}$ if $x_i$ and $y_i$ do not have the same sign, then $m_i(x, y) = 0$.  As such, if $x_i$ or $y_i$ is replaced by zero, $m(x, y)$ is not changed.  Since, among $p_i$, $q_i$, and $s_i$, a value is only adjusted to zero if it has a unique sign, this adjustment does not affect the minimal points.

The fact that $h$ is unchanged by the adjustment follows immediately from the fact that the minimal points are unchaged.
\end{proof}

We are now ready to prove Theorem~\ref{gromovhyperbolicthm}, which comprises three main parts.  First we show that for $n \geq 3$, the dimension of the space can be reduced without increasing the supremum for $h$.  Second, we show that when $n = 2$, the supremum for $h$ is $\frac{1}{2} \ln(3)$.  As mentioned above, if a supremum $\delta_0$ can be found for $h$, then Inequality~\eqref{gromovineq} holds for all $w$ and at most $\delta = 2 \delta_0$ so these first two steps show that $\delta \leq \ln(3)$ in all dimensions.  The final step is to show that $\delta = \ln(3)$ is also necessary in all dimensions.

\begin{proof}[proof of Theorem~\ref{gromovhyperbolicthm}]
By Lemma~\ref{hadjustlemma}, the points $p$, $q$, and $s$, can be adjusted to lie in the same orthant without altering the value of $h$.  As such, without loss of generality, suppose all three points lie in the all-nonnegative orthant.

Noting that $h$ is symmetric in its first two arguments, consider the first three coordinates of $p$ and $q$.  For one of these points, two of the coordinates are no greater than the corresponding coordinates of the other point, so without loss of generality, suppose $p_1 \leq q_1$ and $p_2 \leq q_2$.  Now apply the following transformation:  let $\widehat{p} = (p_1 + p_2, p_3, \ldots, p_n)$, $\widehat{q} = (q_1 + q_2, q_3, \ldots, q_n)$, and $\widehat{s} = (s_1 + s_2, s_3, \ldots, s_n)$.  Note that these points lie in $\hyp^{n-1}$.  Also,
\begin{align*}
r_{m(\widehat{p}, \widehat{q})} &= \tanh^{-1}(|m_1(\widehat{p}, \widehat{q})|
								+ |m_2(\widehat{p}, \widehat{q})|
								+ |m_3(\widehat{p}, \widehat{q})|
								+ \cdots + |m_n(\widehat{p}, \widehat{q})|) \\
						&= \tanh^{-1}(p_1 + p_2 + |m_3(p, q)| + \cdots + |m_n(p, q)|) \\
						&= \tanh^{-1}(|m_1(p, q)| + |m_2(p, q)| + |m_3(p, q)| + \cdots + |m_n(p, q)|) \\
						&= r_{m(p, q)}
\end{align*}
while
\begin{align*}
r_{m(\widehat{p}, \widehat{s})} &= \tanh^{-1}(|m_1(\widehat{p}, \widehat{s})|
								+ |m_2(\widehat{p}, \widehat{s})|
								+ |m_3(\widehat{p}, \widehat{s})|
								+ \cdots + |m_n(\widehat{p}, \widehat{s})|) \\
					&= \tanh^{-1}(\min \{p_1 + p_2, s_1 + s_2\} + |m_3(p, s)| + \cdots + |m_n(p, s)|) \\
					&\geq \tanh^{-1}(\min \{p_1, s_1\} + \min \{p_2, s_2\}
							+ |m_3(p, s)| + \cdots + |m_n(p, s)|) \\
					&= \tanh^{-1}(|m_1(p, s)| + |m_2(p, s)| + |m_3(p, s)| + \cdots + |m_n(p, s)|) \\
					&= r_{m(p, s)}
\end{align*}
and similarly
\[
r_{m(\widehat{q}, \widehat{s})} \geq  r_{m(q, s)}.
\]
Together, these imply that $h(\widehat{p}, \widehat{q}, \widehat{s}) \geq h(p, q, s)$, and this implies that for $n \geq 3$
\[
\sup_{p, q, s \in \hyp^n} h(p, q, s) \leq \sup_{p, q, s \in \hyp^{n-1}} h(p, q, s).
\]

The next step is to show that in dimension two, $\sup_{p, q, s \in \hyp^2} h(p, q, s) = \frac{1}{2}\ln(3)$.  By Lemma~\ref{hadjustlemma} the points can be adjusted to lie in the same quadrant without affecting the value of $h$, so without loss of generality, suppose all three points lie in the first quadrant.

Recalling again that $h$ is symmetric in the first two arguments, consider three cases.  First, if $s$ does not lie beyond $m(p, q)$, then $h(p, q, s) \leq 0$.  To see this, without loss of generality, suppose $s_1 < \min \{p_1, q_1\}$ so
\[
m_1(p, s) = m_1(q, s) = s_1.
\]
Then
\begin{align*}
m(p, s) &= (s_1, \min \{p_2, s_2\}), \\
m(q, s) &= (s_1, \min \{q_2, s_2\}), \\
m(p, q) &= (\min \{p_1, q_1\}, \min \{p_2, q_2\}).
\end{align*}
Note also that
\[
\min \{p_2, q_2\} \geq \min \Big\{ \min \{p_2, s_2\}, \min \{q_2, s_2\} \Big\}.
\]
Therefore $m(p, q)$ must lie beyond at least one of $m(p, s)$ and $m(q, s)$ and this implies that
\[
h(p, q, s) = \min \{r_{m(p, s)}, r_{m(q, s)} \} - r_{m(p, q)} \leq 0.
\]

For the second case, suppose $s$ lies beyond $m(p, q)$ and, without loss of generality, $p$ lies beyond $q$.  In this case, $m(p, q) = m(q, s) = q$ and $m(p, s)$ lies beyond $q$.  As such,
\[
h(p, q, s) = \min \{r_{m(p, s)}, r_{m(q, s)} \} - r_{m(p, q)} = 0.
\]

Third, suppose $s$ lies beyond $m(p, q)$ and neither $p$ nor $q$ lie beyond the other.  Suppose, without loss of generality, that $p_1 < q_1$ and $p_2 > q_2$.  Then $s_1 > p_1$  so $m(p, s) = (p_1, \min \{p_2, s_2\})$ and $(p_1, s_2)$ lies beyond this point.  Similarly, $s_2 > q_2$ so $m(q, s) = (\min \{q_1, s_1\}, q_2)$ and $(s_1, q_2)$ lies beyond this point.  See Figure~\ref{deltahyponequadfig} for an example of this scenario.  With these relationships, since $s_1 + s_2 < 1$
\begin{align*}
h(p, q, s) &= \min \{r_{m(p, s)}, r_{m(q, s)}\} - r_{m(p, q)} \\
		&\leq \min \{r_{(p_1, s_2)}, r_{(s_1, q_2)}\} - r_{(p_1, q_2)} \\
		&= \min \{\tanh^{-1}(p_1 + s_2), \tanh^{-1}(s_1 + q_2) \} - \tanh^{-1}(p_1 + q_2) \\
		& \leq \min \{\tanh^{-1}(p_1 + 1 - s_1), \tanh^{-1}(s_1 + q_2) \} - \tanh^{-1}(p_1 + q_2).
\end{align*}
Fixing $p_1$ and $q_2$, since the inverse hyperbolic tangent is monotonic, the first term will be maximized when $p_1 + 1 - s_1 = s_1 + q_2$ so that $s_1 = \frac{1 + p_1 - q_2}{2}$.  Hence
\begin{align*}
h(p, q, s) &\leq \tanh^{-1}\left(\frac{1 + p_1 + q_2}{2} \right) - \tanh^{-1}(p_1 + q_2) \\
		&= \tanh^{-1} \left( \frac{1 - (p_1 + q_2)}{2 - (1 + p_1 + q_2)(p_1 + q_2)} \right) \\
		&= \tanh^{-1} \left(\frac{1}{2 + (p_1 + q_2)} \right)
\end{align*}
where the last simplification is allowed since $p_1 + q_2 < 1$. Since $0 \leq p_1 + q_2 < 1$, this is maximized when $p_1 + q_2 = 0$ implying that $\sup_{p, q, s} h(p, q, s) \leq \tanh^{-1} \left( \frac{1}{2} \right) = \frac{1}{2}\ln(3)$.

\begin{figure}
\begin{picture}(240,200)
\put(10,0){
\includegraphics[scale = .8, clip = true, draft = false]{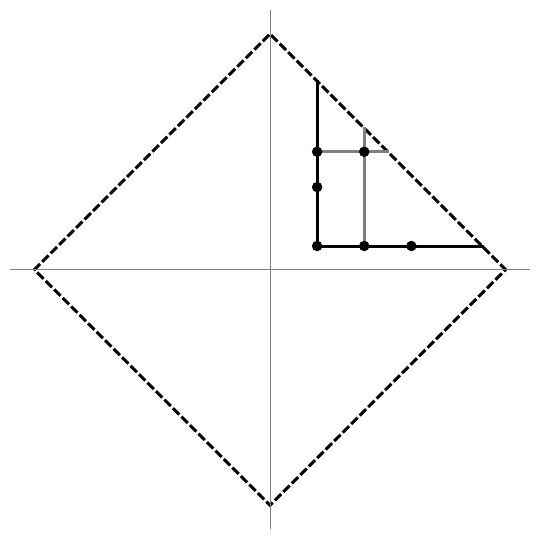}
}
\put(82,131){$p = m(p, s)$}
\put(104,105){$m(p, q)$}
\put(173,105){$q$}
\put(155,142){$s$}
\put(118,90){$(s_1, q_2) = m(q, s)$}
\put(154,97){\vector(0,1){13}}

\put(100,150){$(p_1, s_2)$}
\end{picture}
\caption{The points of interest in the third case of the second step of the proof of $\delta$-hyperbolicity.  In this example, $p$ and $s$ are positioned such that $m(p, s) = p$, and $(p_1, s_2)$ lies beyond this point.  Meanwhile, neither $q$ nor $s$ lie beyond the other and $m(q, s) = (s_1, q_2)$.
} \label{deltahyponequadfig}
\end{figure}

The final step is to show that in fact $\delta = \ln(3)$ is needed when the fourth point is allowed to vary.   Let $s = (t, t)$, $p = (-t, t)$, $q = (t, -t)$, and $w = (-t, -t)$.  Then the various minimal points here are $(\pm t, 0)$ and $(0, \pm t)$ and so by direct calculation,
\[
\min \{G(p, s; w), G(q, s; w) \} - G(p, q; w) = 2 \tanh^{-1}(t).
\]
Letting $t$ approach $\frac{1}{2}$ shows that
\[
\sup_{p, q, s, w \in \hyp} \bigl[ \min \{G(p, s; w), G(q, s; w) \} - G(p, q; w) \bigr] = 2 \tanh^{-1} \left(\frac{1}{2} \right) = \ln(3).
\]
See Figure~\ref{deltahypfactoroftwofig}.

\begin{figure}
\begin{picture}(240,200)
\put(10,0){
\includegraphics[scale = .8, clip = true, draft = false]{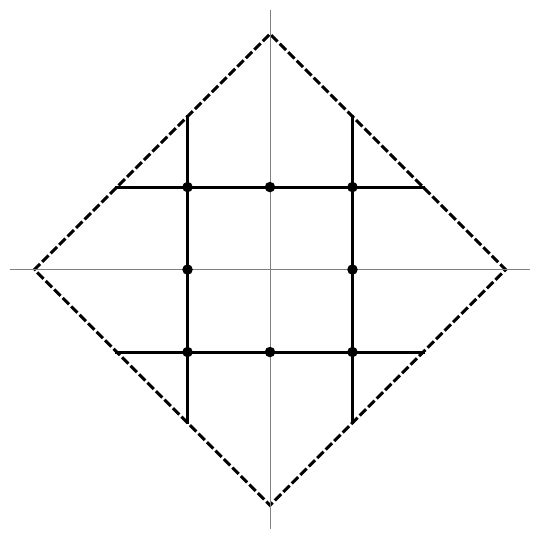}
}
\put(78,129){$p$}
\put(102,141){$m(p, s)$}
\put(50,95){$m(p, w)$}
\put(142,65){$q$}
\put(151,138){$s$}
\put(76,65){$w$}
\put(152,108){$m(q, s)$}
\put(102,63){$m(p, s)$}
\end{picture}
\caption{The points of interest in the third step of the proof of $\delta$-hyperbolicity.} \label{deltahypfactoroftwofig}
\end{figure}

This construction is shown for $n = 2$, but since $\hyp^2$ is naturally contained in $\hyp^n$ for all $n \geq 2$, $\delta = \ln(3)$ is necessary for all dimensions.

\end{proof}

\subsection{Medians}

Given three points $x$, $y$, and $z$, we define the median to be
\[
\mu(x, y, z) = [x, y] \cap [y, z] \cap [z, x]
\]
if it exists and is unique.

Before providing a characterization of those triples of points in $\hyp$ that have medians, we establish some supporting facts.

Given three points $x$, $y$, and $z$ in $\hyp$, define a new point $\nu(x, y, z)$ coordinate-wise by
\[
\nu_i(x, y, z) = \mbox{median}\{x_i, y_i, z_i\}
\]
where on the right, this is the numerical or statistical median.   Taxicab space $(\mathbb{R}^n, d_T)$ is a median space and $\mu(x, y, z) = \nu(x, y, z)$.  As we shall see, not all triples of points in $\hyp$ support a median, but if a median does exist, then, like taxicab space, it must be this coordinate-wise median point $\nu$.

\begin{lemma} \label{statmedianlemma}
If $p$, $q$, and $s$ in $\hyp$ support a median $\mu$, it must be the point $\nu$.
\end{lemma}

\begin{proof}
By Theorem~\ref{minimizerthm}, if a point $x$ lies in $[p, q]$, then it must lie in a fully monotonic curve from $p$ to $q$ and hence, for each index $i \in \{1, \ldots, n\}$, $\min\{p_i, q_i\} \leq x_i \leq \max\{p_i, q_i\}$.
Let $x \in [p, q] \cap [q, s] \cap [s, p]$ and fix $i$.  Without loss of generality, suppose $p_i \leq q_i \leq s_i$.  Then $p_i \leq x_i \leq q_i$ and $q_i \leq x_i \leq s_i$.  But this implies $x_i = q_i$.
\end{proof}

Note that this lemma shows that if $]p, q] \cap [q, s] \cap [s, p]$ is nonempty, it consists of a single point, so the uniqueness part of the requirement to be a median is satisfied.

The following technical lemma establishes relationships among the coordinates of minimal points defined by a given triple of points in $\hyp$.

\begin{lemma} \label{techmedianlemma}
Given three points $p$, $q$, and $s$ in $\hyp$ and an index $i \in \{1, \ldots, n\}$, consider the numbers $m_i(p, q)$, $m_i(p, s)$, and $m_i(q, s)$.  Then
\begin{itemize}
\item at least two of these numbers are equal;
\item if they are not all equal, the unique value is equal to $\nu_i(p, q, s)$.
\item Suppose $m_i(p, s) = m_i(q, s)$.  Then $m_i(m(p, q), s) = m_i(p, s)$.
\end{itemize}
\end{lemma}

\begin{proof}

For the first two statements, consider the triple $\{p_i, q_i, s_i\}$.  Observe that if there is no unique sign, then for two of the minimal points, the $i^{\mathrm{th}}$ coordinate is $\min\{p_i, q_i, s_i\}$, while if some number in $\{p_i, q_i, s_i\}$ has a unique sign, then two of the minimal points have an $i^{\mathrm{th}}$ coordinate of 0.  In both cases, the third minimal point is the middle value among the $\{p_i, q_i, s_i\}$.

For the third statement, consider two possibilities.  If no number in $\{p_i, q_i, s_i\}$ has a unique sign then since $m_i(p, s) = m_i(q, s)$, it follows that $|s_i| \leq |p_i|$ and $|s_i| \leq |q_i|$.  This implies both that $m_i(p, s) = m_i(q, s) = s_i$ and that $|s_i| \leq |m_i(p, q)|$ so $m_i(m(p, q), s) = s_i$. Hence
$m_i(p, s) = m_i(m(p, q), s)$.

On the other hand, if some number has a unique sign among $\{p_i, q_i, s_i\}$ then in fact $s_i$ has the unique sign, or $s_i = 0$.  In either case, $m_i(p, s) = m_i(q, s) = 0$, but also $m_i(m(p, q), s) = 0$.

Together, these two cases imply that $m_i(p, s) = m_i(m(p, q), s)$
\end{proof}

With this lemma in hand, we now have:

\begin{thm} \label{mediancharacterizationthm}
Given three points $p$, $q$, and $s$ in $\hyp$, if $m(p, s) = m(q, s)$, then $\{p, q, s\}$ supports a median and $\mu(p, q, s) = m(p, q)$.  Conversely, if the three minimal points defined by $p$, $q$, and $s$ are distinct, then $\{p, q, s\}$ does not support a median.
\end{thm}

\begin{proof}

Suppose $m(p, s) = m(q, s)$ and let $m = m(p, q)$.  Note that $m$ always lies in $[p, q]$ since, by Theorem~\ref{minimizerthm}, every geodesic connecting $p$ and $q$ must pass through $m$.

To confirm that $m \in [p, s]$, note that $d(p, m) = r_p - r_m$ while $d(p, s) = r_p + r_s - 2 r_{m(p, s)}$ so
\begin{align*}
d(p, m) + d(m, s) &= r_p - r_m + r_m + r_s - 2r_m(m, s) \\
			&= r_p + r_s - 2r_m(p, s) \\
			&= d(p, s). \\
\end{align*}
Hence $m \in [p, s]$.  Similarly, $m \in [q, s]$.

On the other hand, suppose $m$, $m(p, s)$, and $m(q, s)$ are distinct, but $p$, $q$, and $s$ support a median $\mu$ nonetheless.  By the first statement in Lemma~\ref{techmedianlemma}, for each index $i \in \{1, \ldots, n\}$ at least two of the numbers $m_i$, $m_i(p, s)$, and $m_i(q, s)$ are equal.  Since the three minimal points are distinct, it must then be the case that without loss of generality,
\[
\mu_1 = m_1(q, s) \neq m_1(p, q) = m_1(p, s)
\]
while
\[
\mu_2 = m_2(p, s) \neq m_2(p, q) = m_2(q, s).
\]

If for a given $i$, no number in $\{p_i, q_i, s_i\}$ has a unique sign, without loss of generality, suppose they are all nonnegative.  Similarly, if there is a number with a unique sign, without loss of generality, suppose the unique sign is negative.  Then for both $i$, $\mu_i \geq 0$ and so by Lemma~\ref{statmedianlemma} the third statement in Lemma~\ref{techmedianlemma},
\[
\mu_1 = m_1(q, s) > m_1(p, q) = m_1(p, s)
\]
while
\[
\mu_2 = m_2(p, s) > m_2(p, q) = m_2(q, s)
\]
so in the first two coordinates, $\mu$ lies beyond $m(p, q)$ and $p_1 < \mu_1 \leq q_1$ and $q_2 < \mu_2 \leq p_2$.

Hence, there exists a fully monotonic curve $\gamma$ connecting $p$, $q$, $m$, and $\mu$, and any such curve is increasing in the first coordinate and decreasing in the second.  But $m_1 < \mu_1$ and $m_2 < \mu_2$ which implies $\gamma$ is increasing in both coordinates or decreasing in both coordinates, and this results in a contradiction.

\end{proof}

With Theorem~\ref{mediancharacterizationthm} providing a characterization of medians in $\hyp$, examples of triples of points which do or do not support medians can more easily be found.  For example, for all $p$ and $q$ in $\hyp$, $\{p, q, \origin\}$ supports a median since $m(p, \origin) = m(q, \origin) = \origin$.  Additionally, in $\hyp^2$, if $p$ and $q$ are in adjacent quadrants and $s$ lies on the coordinate axis separating these quadrants, then $\{p, q, s\}$ supports a median since for at least one index $i$, $m_i(p, q)= m_i(p, s) = m_i(q, s) = 0$.  On the other hand, if $p$, $q$, and $s$ all lie in the same quadrant, neither $p$ nor $q$ lies beyond the other, and $s$ lies strictly beyond $m(p, q)$, then $\{p, q, s\}$ does not support a median, nor is a median supported if $p$, $q$, and $s$ each lie in their own quadrant.  See Figure~\ref{medianfig} for illustrations of these examples.  Finally, since $\hyp^2 \subset \hyp^n$, similar examples exist in higher dimensions as well.  Together, these examples imply

\begin{cor} \label{medspacecor}
The space $\hyp$ is not a median space.
\end{cor}

\begin{figure}
\begin{picture}(360,360)
\put(-10,180){
\includegraphics[scale = .7, clip = true, draft = false]{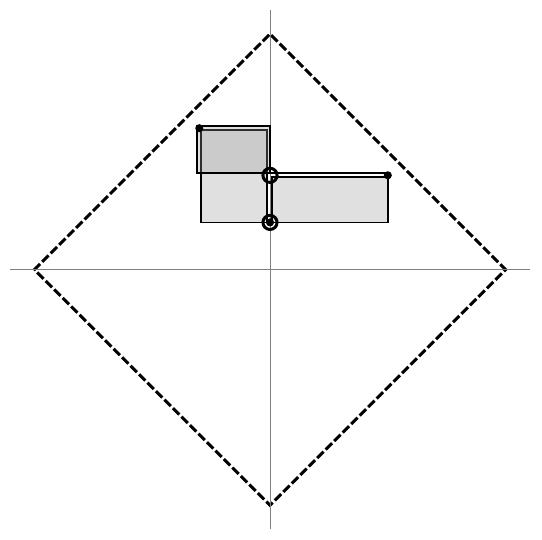}
}
\put(180,180){
\includegraphics[scale = .7, clip = true, draft = false]{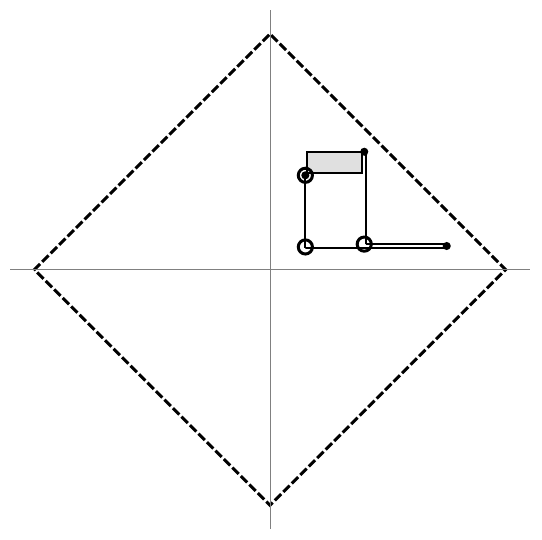}
}
\put(-10,0){
\includegraphics[scale = .7, clip = true, draft = false]{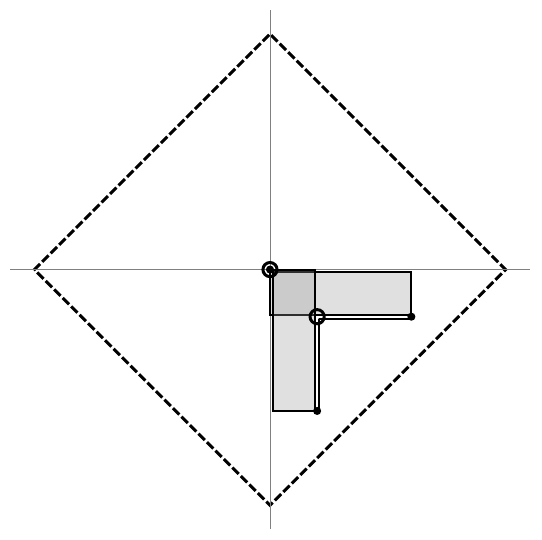}
}
\put(180,0){
\includegraphics[scale = .7, clip = true, draft = false]{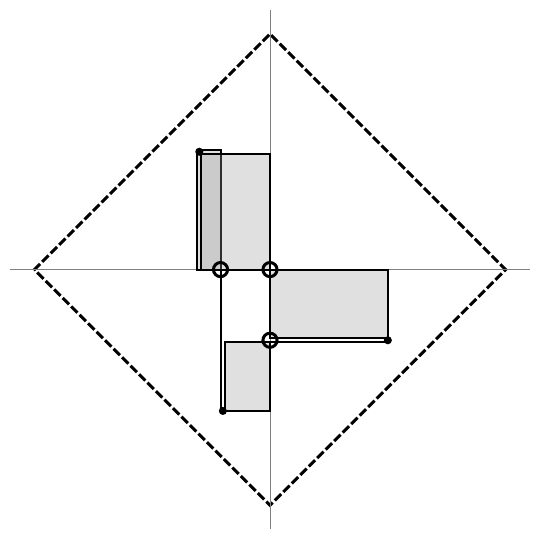}
}

\end{picture}
\caption{Triples of points that support or do not support a median.  The points in each triple are identified as black dots and their minimal points are identified with circles.  The intervals for each pair of points are also shown, the edges of which have been offset slightly from their actual positions for clarity.  On the left, in each triple, two of the minimal points coincide, and the third minimal point, which lies beyond the other two, is the median.  On the right, each triple produces three distinct minimal points and there is no median.}   \label{medianfig}
\end{figure}

\section{Final thoughts and next steps} \label{finalsec}

We finish with some ideas for future exploration related to our taxicab Poincar\'{e} ball and taxicab hyperbolic geometry more generally.

\subsection{Hyperbolic models}

In our attempt to modify the Poincar\'{e} ball model, the weight $\frac{1}{1-\|x\|_T^2}$ used to adjust the norm was used because of its algebraic similarity to the weight function used for the regular Poincar\'{e} ball.  It seems we were lucky that our choice was amenable to fairly straightforward analysis and resulted in clean results.  That being said, other weight functions may be worth considering.

Also, the Poincar\'{e} ball is one of many models for hyperbolic space.  It would be interesting to look at other models from the taxicab perspective presented here.  The upper half-space model would be the next natural choice.  We expect that identifying length minimizing curves in this setting would require analysis comparable to what was done for $\hyp$.  Interestingly, the isometry group for an upper half-space model would be infinite, allowing at least for translations parallel to the hyperplane boundary.

Alternatively, developing a taxicab hyperbolic space could be attempted from the perspective of the isometry group.  The isometry group for hyperbolic space is isomorphic to $O^+(n,1)$.  It would be interesting to search for a similar group that acts transitively on some set, has point stabilizers that are isomorphic to the hyperoctahedral group, thus exhibiting a local taxicab structure, and where the resulting space is hyperbolic in some sense.

Finally, the structure of the Poincar\'{e} ball is closely related to the geometric transformation of inversion.  Some initial attempts by the authors to develop a comparable transformation in the taxicab setting were unsuccessful, but more work is warranted in this direction.

%Bibliography

\bibliographystyle{amsalpha}
\bibliography{taxicab-HG.bib}

\end{document}